\newtheorem*{thmA}{Theorem A}
\newtheorem*{thmB}{Theorem B}
\numberwithin{equation}{section}
\newtheorem{thm}{Theorem}[section]
\newtheorem{lem}[thm]{Lemma}
\newtheorem{prop}[thm]{Proposition}
\newtheorem{cor}[thm]{Corollary}
\theoremstyle{definition}
\newtheorem{defn}[thm]{Definition}
\theoremstyle{remark}
\newtheorem{rmk}[thm]{Remark}
\newtheorem{rmks}[thm]{Remarks}
\newtheorem{ex}[thm]{Example}
\newtheorem{exs}[thm]{Examples}
\newtheorem{notn}[thm]{Notation}
\newcommand\Om{\Omega}
\newcommand\del{\partial}
\newcommand\vp{\varphi}
\newcommand \ve{\varepsilon}
\newcommand\si{s^{-1}}
\newcommand{\cat}{\mathbf}
\newcommand{\ob}{\operatorname{Ob}}
\newcommand\ot{\otimes}
\newcommand\egal[2]{\overset {#1}{\underset {#2}\rightrightarrows }}
\newcommand\adjunct[2]{\overset {#1}{\underset {#2}\rightleftarrows }}
\newcommand\cof{\;\ar@{ >->}[r]}
\newcommand\ccof{\;\ar@{ >->}[rr]}
\renewcommand{\Bar}{\mathscr B}
\newcommand{\U}{\mathscr E}
\renewcommand{\P}{\mathscr P}
\newcommand\Ho{\operatorname{Ho}}
\newcommand\G{\mathsf G}
\newcommand\W{\mathsf W}
\newcommand\ow{\overline\W}
\newcommand\hker{\backslash\negthinspace \backslash}
\newcommand\hquot{/\negthinspace/}
\begin{document}

\title{Normal and conormal maps in homotopy theory}

\author {Emmanuel D. Farjoun}
\email{farjoun@math.huji.ac.il}
\address{Department of Mathematics, Hebrew University of Jerusalem,
Givat Ram, Jerusalem 9190, Israel}

\author{Kathryn Hess}
\email{kathryn.hess@epfl.ch}
\address{MATHGEOM,
    \'Ecole Polytechnique F\'ed\'erale de Lausanne,
    CH-1015 Lausanne,
    Switzerland}

\begin{abstract} Let $\cat M$ be a monoidal category endowed with a distinguished class of weak equivalences and  with appropriately compatible classifying bundles for monoids and comonoids.  We define and study homotopy-invariant notions of normality for maps of monoids  and of conormality for maps of comonoids in $\cat M$.  These notions generalize both principal bundles and crossed modules and are preserved by nice enough monoidal functors, such as the normaliized chain complex functor.

 We provide   several explicit  classes of examples of homotopy-normal and of homotopy-conormal maps, when $\cat M$ is the category of simplicial sets or the category of chain complexes over a commutative ring.
\end{abstract}

 \keywords {normal map, monoidal category, homotopical category, twisting structure.} 
 \subjclass {18D10, 18G55, 55P35, 55U10, 55U15, 55U30, 55U35.}

\maketitle

\section*{Introduction}
In a recent article \cite{farjoun-segev}, it was observed that a crossed module of groups could be viewed as a up-to-homotopy version of the inclusion of a normal subgroup.  Motivated by this observation, the authors of \cite{farjoun-segev} formulated a definition of h-normal (homotopy normal) maps of simplicial groups, noting in particular that the conjugation map from a simplicial group into its simplicial automorphism group is h-normal.

The purpose of this paper is to provide a general, homotopical framework for the results in \cite{farjoun-segev}.  In particular, given a monoidal category $\cat M$ that is also a homotopical category \cite{dwyer-hirschhorn-kan-smith}, under reasonable compatability conditions between the two structures, we define and study notions of h-normality for maps of monoids and of h-conormality for maps of comonoids (Definition \ref{defn:normal}), observing that the two notions are dual in a strong sense.  Our work is complementary to that of Prezma in \cite{prezma}.

\subsection*{The motivating example} The following definition of homotopy normal maps 
of discrete groups or, more generally, of loop spaces, inspired our Definition \ref{defn:normal}. A map $n: N\to G$ of loop spaces was said in \cite {farjoun-segev} to be {\em homotopy normal}
if there exists a  connected, pointed space $W$  and a map $ w:BG\to W,$  its ``normality structure,'' such
 that the usual Nomura-Puppe sequence  obtained by
taking the Borel construction $EN\times_NG$, denoted $G\hquot N$ below, can be extended
to the right to form a  fibration sequence, all of whose terms are obtained from the map $w$ by repeatedly taking
homotopy fibres:
\begin{equation}\label{eqn:NP}\xymatrix{ N\ar[r]^n&G\ar[r]&G\hquot N\ar[r]&BN\ar[r]&BG\ar[r]^w&W.}\end{equation}

Once we have formulated the definition of h-normality and h-conormality, we provide several interesting classes of examples.  We first show that trivial extensions of monoids are h-normal and that trivial extensions of comonoids are h-conormal, at least under a reasonable extra condition on the category $\cat M$ (Proposition \ref{prop:trivialext}).  Considering the category of simplicial sets in particular, we then establish the following characterization (Propositions \ref{prop:looping-conormal}, \ref{prop:char-h-normal} and \ref{prop:allsimpl}), which shows that Definition \ref{defn:normal} does in fact generalize the motivating example.

\begin{thmA}
 \begin{enumerate}
\item A morphism $f$  of simplicial groups is h-normal if and only if there is a morphism of reduced simplicial sets $g:X\to Y$ such that $f$ is weakly equivalent to the (Kan) loops on $\operatorname{hfib}(g)\to X$, where $\operatorname{hfib}(g)$ denotes the homotopy fiber of $g$.
\item Any morphism of simplicial sets is h-conormal.
\end{enumerate}
\end{thmA}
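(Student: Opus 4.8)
The plan is to treat the two parts separately, throughout identifying simplicial groups with reduced simplicial sets via Kan's loop group functor $\G$ and its adjoint classifying-space functor $\overline W$, which induce an equivalence between the respective homotopy categories and satisfy $\G\simeq\Omega$ and $\overline W\simeq B$. Under this identification the classifying bundle (in the sense underlying Definition \ref{defn:normal}) of a simplicial group $\mathbb G$ becomes Kan's universal bundle $\mathbb G\to W\mathbb G\to\overline W\mathbb G$, the abstract Borel construction of a monoid map becomes the usual one $\mathbb G\hquot\mathbb H$, and the ``fiber sequences generated by a map'' of Definition \ref{defn:normal} become ordinary Nomura--Puppe sequences; making this dictionary precise, tracking all the monoid and comonoid structure through it, is one of the things to nail down.

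I would prove part (2) first. In the cartesian monoidal category $(\cat{sSet},\times)$ every simplicial set carries a \emph{unique} comonoid structure --- the counit is the canonical map to the terminal object and the comultiplication is the diagonal --- and every map of simplicial sets is automatically a map of comonoids. The plan is to exploit this rigidity: since the comultiplication is forced, the classifying bundle of a comonoid in $(\cat{sSet},\times)$ and the associated coBorel construction are essentially formal, so for an arbitrary $g\colon C\to D$ one can write down a canonical conormality structure --- built from the (formal) classifying bundle of $C$ --- whose iterated homotopy cofibers reconstruct a full cofiber sequence ending with $g$. The content of Proposition \ref{prop:allsimpl} is then the routine but careful check that this canonical structure satisfies each compatibility axiom of Definition \ref{defn:normal}. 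Since, unlike the classifying space of a group, a cocommutative comonoid in a cartesian category carries no homotopical obstruction, nothing obstructs h-conormality, and every $g$ is h-conormal.

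Now part (1). For the ``only if'' direction, I would unwind Definition \ref{defn:normal} for a map $f\colon N\to G$ of simplicial groups: an h-normality structure amounts, up to weak equivalence, to a reduced simplicial set $W$ together with a map $w\colon\overline W G\to W$ such that the Nomura--Puppe sequence of $\overline W f\colon\overline W N\to\overline W G$ --- namely $N\to G\to G\hquot N\to\overline W N\to\overline W G$ --- extended on the right by $w$ coincides with the Nomura--Puppe sequence of $w$. Because the former sequence is generated (to the left) by $\overline W f$ alone and the latter by $w$ alone, this coincidence reduces to the single demand that $\operatorname{hfib}(w)\to\overline W G$ be weakly equivalent, over $\overline W G$ and compatibly with the group structures and basepoints, to $\overline W f$ (and one checks that $W$ may be taken connected, hence reduced). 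Applying $\G$ and using $\G\overline W\simeq\operatorname{id}$, this says exactly that $f$ is weakly equivalent to the loops on $\operatorname{hfib}(w)\to\overline W G$, which is of the asserted form with $X:=\overline W G$, $Y:=W$, $g:=w$; this is essentially Proposition \ref{prop:char-h-normal}. For the ``if'' direction one can reverse the argument --- from $f$ weakly equivalent to the loops on $\operatorname{hfib}(g)\to X$ one gets $G\simeq\Omega X$, hence $\overline W G\simeq X$, and $g$ then gives rise to the required normality structure --- or, more conceptually, note that $\operatorname{hfib}(g)\to X$ is a map of comonoids, hence h-conormal by part (2), and invoke that the loop group of an h-conormal map is h-normal (Proposition \ref{prop:looping-conormal}).

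The step I expect to be the main obstacle is that reduction in part (1): passing from the full condition of Definition \ref{defn:normal} --- which constrains infinitely many terms of a sequence together with all their compatibilities with the simplicial-group structures --- down to the single tractable condition on $\operatorname{hfib}(w)$. This rests on the fact that a Nomura--Puppe sequence is determined, to the left, by any one of its maps, together with the precise dictionary above between the abstract classifying bundles of Definition \ref{defn:normal} and Kan's concrete universal bundle, with all structure carried through it. Part (2), by contrast, involves no genuine homotopical difficulty --- only the bookkeeping needed to verify that the canonical, formal conormality structure satisfies each axiom.
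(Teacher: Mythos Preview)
Your outline for the ``only if'' direction of part~(1) is essentially the paper's own argument (Proposition~\ref{prop:char-h-normal}): take the normality structure $g\colon\overline W G'\to Y$, apply $\G$ to the bottom two rows of the defining zigzag, and use $\G\overline W\simeq\mathrm{id}$. That part is fine.

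The genuine gaps are elsewhere. First, your treatment of part~(2) is both terminologically confused and underestimates the content. H-conormality is about \emph{fiber} sequences: the dual Nomura--Puppe sequence of $g\colon X\to Y$ is $\G X\to\G Y\to Y\hker X\to X\to Y$, and the Borel kernel $Y\hker X$ is a homotopy \emph{fiber}, not a cofiber; there are no ``iterated homotopy cofibers'' anywhere in Definition~\ref{defn:normal}. More importantly, the claim that ``nothing obstructs h-conormality'' because comonoid structure is forced is not the actual argument. What you must exhibit is a monoid map $f$ with $\tau(f)\sim\theta(g)$, and the paper produces $f=\G\iota_g$ via Lemma~\ref{lem:rigidconormal}, whose hypothesis is the existence of a specific weak equivalence $\tilde\iota\colon\G Y\xrightarrow{\sim} X\hker(Y\hker X)$ making a square commute. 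That equivalence (Lemma~\ref{lem:simpl-good}) is the real computational content: it comes from an explicit simplicial isomorphism
\[
(X\times_{\tau_Y g}\G Y)\times_{\tau_X\iota_g}\G X\;\cong\;(X\times_{\tau_X}\G X)\times\G Y,\qquad (x,v,w)\mapsto(x,w,\G(g)(w)^{-1}\cdot v),
\]
followed by contractibility of $X\times_{\tau_X}\G X$. This is not deep, but it is not ``purely formal bookkeeping'' either, and your plan does not identify it.

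Second, your ``if'' direction of part~(1) is circular. Proposition~\ref{prop:looping-conormal} \emph{is} the ``if'' direction: it asserts that $\G\iota_g$ is h-normal, for arbitrary $g$. You cannot cite it to prove it. Moreover, the general principle you attribute to it --- ``the loop group of an h-conormal map is h-normal'' --- is not what it says and is not an obvious general fact: if $h$ is h-conormal with conormality structure $f$, then by definition $f$ (not $\Om h$) is h-normal, and there is no immediate reason for $\Om h$ itself to be. The paper's proof of Proposition~\ref{prop:looping-conormal} instead builds an explicit four-column zigzag of extended bundles linking $\theta(\overline W\G g)$ to $\tau(\G\iota_g)$, using Lemma~\ref{lem:amusing} twice and the same key Lemma~\ref{lem:simpl-good} in the middle.

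Finally, you have the difficulty exactly backwards: the ``reduction'' you flag in part~(1) is the easy step, while the honest work lives in Lemma~\ref{lem:simpl-good}, which drives both Proposition~\ref{prop:looping-conormal} and Proposition~\ref{prop:allsimpl}.
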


Note that part (1) of the theorem above is analogous to the fact that the normal subgroups of a group $G$ are exactly the kernels of surjective homomorphisms with domain $G$.  In Remark \ref{rmk:bimonoid-kernel} we discuss the possibility of generalizing this result to other monoidal categories, pointing out some of the obvious difficulties and indicating how they might be avoided.

\subsection*{Chain complexes}
Motivated by the classical problem of determining the effect of applying various generalized homology theories to (co)fibration sequences,  we consider next the question of whether the normalized chains functor preserves h-(co)normality. The study of h-(co)normality in the category of chain complexes over a commutative ring is more delicate than in the simplicial case, as the tensor product of chain complexes is not the categorical product. It does turn out that, just as any subgroup of an abelian group is normal, any morphism of commutative chain algebras is h-normal;  the dual result holds as well (Proposition \ref{prop:abelian}).  Moreover, the normalized chains functor $C_{*}$ from simplicial sets to chain complexes preserves both h-normality and h-conormality, at least under mild connectivity conditions (Proposition \ref{prop:ch-conormal-induced} and Corollary \ref{cor:ch-normal-induced}).

\begin{thmB}  \begin{enumerate}
\item If $f: G\to G'$ is an h-normal morphism of reduced simplicial groups, then $C_{*}f:C_{*}G\to C_{*}G'$ is an h-normal morphism of chain algebras.
\item If $g:X\to Y$ is a simplicial morphism, where $Y$ is $1$-reduced and both $X$ and $Y$ are of finite type, then $C_{*}g: C_{*}X\to C_{*}Y$ is an h-conormal morphism of chain coalgebras.
\end{enumerate}
\end{thmB}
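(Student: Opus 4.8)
The plan is to transport the (co)normality structure of the given simplicial (co)morphism through the normalized chains functor $C_*$, which is lax monoidal via the shuffle map, oplax monoidal via the Alexander--Whitney map, preserves weak equivalences, and sends every simplicial set to a degreewise free chain complex. The two workhorses are the classical comparisons identifying $C_*$ of the simplicial classifying bundle $\ow(-)$ with the algebraic bar construction $\Bar(-)$, and $C_*$ of the Kan loop group $\G(-)$ with the cobar construction $\Om(-)$ (Szczarba, Adams); together with the Eilenberg--Moore theorem, which measures the failure of $C_*$ to preserve homotopy fibres and shows that it vanishes over a simply connected base. For h-normality this suffices with no finite-type hypothesis, since one uses the \emph{homology} Eilenberg--Moore theorem, valid over any commutative ring; for h-conormality one must instead invoke the \emph{cohomology} Eilenberg--Moore theorem --- equivalently, $R$-linearly dualise and argue with cochains --- which is why a finite-type hypothesis appears in part (2) but not in part (1).

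For part (1): since h-normality is invariant under weak equivalence, by Theorem A(1) we may assume $f=\G(\operatorname{hfib}(g)\to X)$ for a morphism $g\colon X\to Y$ of reduced simplicial sets; since $G$ and $G'$ are reduced, the spaces $\operatorname{hfib}(g)$, $X$ and $Y$ are in fact simply connected, so we may take $g$ to be a morphism of $1$-reduced simplicial sets. The normality structure of this simplicial morphism is, up to weak equivalence, $Y$ itself, witnessed by the homotopy fibre sequence $\operatorname{hfib}(g)\to X\to Y$. Applying $C_*$ and invoking the Eilenberg--Moore theorem --- legitimate because $Y$ is simply connected --- identifies $C_*\operatorname{hfib}(g)$ with the derived cotensor product $C_*X\,\square^{\mathbf L}_{C_*Y}\,R$, that is, with the homotopy fibre of $C_*g$ in chain coalgebras. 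Combined with the identification $C_*\ow(-)\simeq\Bar C_*(-)$ and with bar--cobar duality $\Om\Bar C_*(-)\simeq C_*(-)$, this realizes $C_*f$, up to weak equivalence of chain algebras, as the cobar on a homotopy fibre sequence of $1$-reduced chain coalgebras, so $C_*f$ is h-normal with normality structure $C_*Y$. It then remains to check that the Eilenberg--Moore zig-zag, the $\ow$-versus-bar comparison, and bar--cobar duality are compatible with the Alexander--Whitney comultiplications, with the multiplications on $C_*G$ and $C_*G'$, and with the canonical Nomura--Puppe maps, so that these data assemble into a normality structure in the sense of Definition \ref{defn:normal}.

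For part (2): by Theorem A(2), $g$ carries a conormality structure, which --- dually to the situation above --- is assembled from homotopy cofibres and from the simplicial co-classifying bundle of $Y$. Transporting it through $C_*$, the key facts needed are that $C_*$ carries the simplicial co-classifying bundle of $Y$ to its chain-level counterpart (a cobar-type construction on $C_*Y$), and that it identifies $C_*$ of the relevant space-level homotopy cofibres with homotopy cofibres in chain coalgebras. These are the $R$-linear duals of the statements used in part (1) --- Adams's cobar theorem and the cohomological Eilenberg--Moore theorem --- and it is precisely here that the hypotheses enter: $1$-reducedness of $Y$ for the cobar comparison, and finite type of $X$ and $Y$ so that $R$-linear dualisation is well behaved. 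Granting these, $C_*$ sends the conormality structure of $g$ to a conormality structure for $C_*g$, so $C_*g$ is h-conormal.

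The step I expect to be the main obstacle, concentrated in part (1), is the Eilenberg--Moore comparison: beyond ensuring that the base of the relevant fibration is simply connected, one must verify that the comparison is natural with respect to the monoid and comonoid structures --- that $C_*$ of the homotopy fibre square is a homotopy fibre square \emph{in comonoids}, compatibly with the algebra structures via bar--cobar duality --- so that the output is a genuine normality structure and not merely a chain of weak equivalences of underlying complexes. This compatibility bookkeeping, rather than any single deep input, is the technical heart. In part (2) the analogous difficulty is milder, amounting to checking that the stated finite-type and connectivity hypotheses suffice to make the dualised Eilenberg--Moore and cobar comparisons valid at the level of chain coalgebras.
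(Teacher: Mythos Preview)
Your approach to part~(1) is broadly aligned with the paper's, though the logical order is reversed: the paper actually derives~(1) \emph{from}~(2). Having shown that $C_*g$ is h-conormal with conormality structure $\Om C_*\iota_g$ (Proposition~\ref{prop:ch-conormal-induced}), the paper observes---as you do---that an h-normal $f$ may be taken to be $\G\iota_g$ for some $g$ between $1$-reduced simplicial sets; then the Szczarba equivalence $C_*\G\iota_g \simeq \Om C_*\iota_g$ and homotopy invariance of h-normality finish~(1) in one line. No separate Eilenberg--Moore argument is run for~(1).

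Your approach to part~(2), however, rests on a genuine misconception. You say the simplicial conormality structure of $g$ is ``assembled from homotopy cofibres and from the simplicial co-classifying bundle of $Y$,'' and that transporting it through $C_*$ requires the \emph{cohomological} Eilenberg--Moore theorem via $R$-linear dualisation, whence the finite-type hypothesis. But in this paper's framework h-conormality has nothing to do with cofibres: the truncated dual Nomura--Puppe sequence of $g:C'\to C$ is $\Om C'\to\Om C\to C\hker C'\to C'$, where $C\hker C'=C'\square_C\P C$ is a Borel \emph{kernel}, i.e.\ a model for the homotopy \emph{fibre}. The simplicial conormality structure of $g:X\to Y$ is the monoid map $\G\iota_g:\G(Y\hker X)\to\G X$ (Lemmas~\ref{lem:rigidconormal} and~\ref{lem:simpl-good}), and the chain-level conormality structure produced for $C_*g$ is $\Om C_*\iota_g$. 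The key technical input is not a dualised Eilenberg--Moore comparison but the Szczarba-type chain model for simplicial principal fibrations (Theorem~\ref{thm:cohoch}), which gives a natural weak equivalence of mixed bundles between the algebraic Borel kernel $C_*Y\hker C_*X$ and $C_*(Y\hker X)$; applying it to $g$ and to $\iota_g$, together with Lemma~\ref{lem:simpl-good} and Lemma~\ref{lem:amusing}(2), yields the required equivalence $\theta(C_*g)\sim\tau(\Om C_*\iota_g)$.

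Consequently your diagnosis of the hypotheses is also off. The $1$-reducedness of $Y$ is needed so that $C_*Y$ is $1$-connected and the cobar/Szczarba comparison applies; the finite-type hypothesis is there because the twisted homotopical structure on chain complexes is set up in $\cat{dgProj}_\Bbbk$ (degreewise finitely generated projective), and one needs $C_*X$, $C_*Y$ and the associated (co)bar constructions to live there---no dualisation is performed anywhere. As written, your argument for~(2) would not go through; replacing the cofibre/cohomological-EM storyline by the fibrewise Szczarba comparison of Theorem~\ref{thm:cohoch} is what is needed.
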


\subsection*{ Twisting structures} We formulate the concepts of h-normality  and h-conor\-mal\-ity within the framework of \emph{twisted homotopical categories}, i.e., categories endowed with both a monoidal and a homotopical structure, where the compatability between the two structures is mediated by a \emph{twisting structure} (Definition \ref{defn:twisting} and \ref{defn:twist}) that satisfies certain additional axioms (Definition \ref{defn:thc}).  The homotopical structure required is considerably less rigid than that of a Quillen model category, and the compatibility between the homotopical and monoidal structure is of a rather different nature from that of a monoidal model category: we require little more than the  capability to compute certain certain equalizers and coequalizers in a homotopy-meaningful manner.

 A twisting structure on a monoidal category is essentially a theory of principal and coprincipal bundles in the monoidal category, based on a looping/delooping (or cobar/bar) adjunction 
\[
\Om: \cat {comon} \rightleftarrows\cat {mon}: \Bar
\]
 between certain full subcategories of comonoids and of monoids in $\cat M$.  The required compatibility with homotopical structure insures that the total object of the classifying bundle of a monoid  or of a comonoid is homotopically trivial.

If $\cat M$ is a twisted homotopical category, then every monoid map $f:A\to A'$ induces an associated \emph{Nomura-Puppe sequence} (Definition \ref{defn:puppe})
\[
A\xrightarrow f A' \xrightarrow {\pi_{f}} A'\hquot A \xrightarrow {\delta _{f}} \Bar A \xrightarrow{\Bar f} \Bar A',
\]
where $A'\hquot A$ is a model for the homotopy quotient of $f$ built from the classifying bundle of $A$.  Similarly, every comonoid map $g:C'\to C$ induces an associated \emph{dual Nomura-Puppe sequence}
\[
\Om C'\xrightarrow {\Om g}\Om C \xrightarrow {\del_{g}} C\hker C'\xrightarrow {\iota _{g}} C' \xrightarrow{g} C,
\]
where $C\hker C'$ is a model for the homotopy kernel of $g$ built from the classifying bundle of $C$.  Our definition of h-normality and h-conormality is formulated in terms of these sequences (cf. Remark \ref{rmk:big-diagram}), whence the importance of the following result (Lemma \ref{lem:amusing}): for every monoid morphism $f:A\to A'$ and for every comonoid morphism $g:C'\to C$, there are  the  following corresponding commuting diagrams:
\[
\xymatrix {\Om \Bar A\ar [r]^{\Om \Bar f}\ar[d]^\sim&\Om \Bar A' \ar[d]_{}^\sim \ar[r]^(0.45){\del_{\Bar f}}& \Bar A'\hker \Bar A \ar [d]^\sim \ar[r]^(0.6){\iota_{\Bar f}}& \Bar A\ar[d]^=\ar[r]^{\Bar f}&\Bar A'\ar [d]^=\\
A\ar[r]^f&A' \ar [r]^{\pi_{f}}& A'\hquot A\ar [r]^{\delta_{f}}&\Bar A\ar[r]^{\Bar f}&\Bar A'}
\]
and
\[
\xymatrix{\Om C' \ar [d]^=\ar[r]^{\Om g}&\Om C \ar [d]^=\ar[r]^{\del_{g}}& C\hker C'\ar[d]^\sim \ar[r]^{\iota_{g}}&C'\ar[d]_{}^\sim\ar [r]^g&C\ar [d]^\sim\\
\Om C \ar[r]^{\Om g}&\Om C\ar [r] ^{\pi_{\Om g}}&\Om C\hquot \Om C' \ar[r] ^{\delta_{\Om g}}& \Bar \Om C'\ar [r]^{\Bar \Om g}&\Bar \Om C.}
\]
In other words, the Nomura-Puppe sequence of $f$ is weakly equivalent to the dual Nomura-Puppe sequence of $\Bar f$, and the dual Nomura-Puppe sequence of $g$ is weakly equivalent to the Nomura-Puppe sequence of $\Om g$.

We show in section \ref{sec:ex-twist-htpic} that both the category of reduced simplicial sets and the category of chain complexes over a commutative ring that are degreewise finitely generated projective are twisted homotopical categories.

The last two sections of this paper consist of appendices, containing necessary algebraic and homotopical preliminaries.  We recall the notions of twisting functions and twisting cochains in the first appendix.  We encourage the reader  with questions about the terminology and notation concerning simplicial sets and chain complexes used throughout this paper to consult this appendix. The second appendix consists of a review and further elaboration of the theory of twisting structures as developed in \cite{hess-lack} and \cite{hess-scott}, of which twisting functions and twisting cochains provide the primary examples.

\subsection*{Perspectives}

The following possible extensions and applications of the theory of h-normality and h-conormality should be interesting to study.
\begin{description}
\item [Functoriality] Under what conditions does a homotopical functor between twisted homotopical categories preserve h-normal and/or h-conormal morphisms?  In particular, is h-(co)normality invariant under localization, in the spirit of the recent article \cite {dwyer-farjoun} and Prezma's results in \cite {prezma}?
\item [Bimonoids] It should be possible to develop a rich theory of ``homotopy exact'' sequences of bimonoids in a twisted homotopical category, since a morphism of bimonoids can be h-normal or h-conormal.  We expect that  Segal structures should prove very useful to this end, in describing ``up to homotopy'' multiplicative and comultiplicative structures.
\item [Other operads] To what operads other than the associative operad can we extend the theory of h-normality of algebra morphisms over operads?  It seems likely that in the chain complex case, an analogous theory makes sense at least for any Koszul operad, built on the associated cobar/bar adjunction.
\item[Other categories]  What other interesting categories admit a reasonable twis\-ted homotopical structure? For example, it was proved in \cite{hess-scott} that the category of symmetric sequences of complexes that are degreewise finitely generated projective, endowed with its composition monoidal structure, is a twistable category admitting a twisting structure extending the usual cobar/bar adjunction, which is almost certainly appropriately compatible with its homotopical structure.  If so, then the study of normal morphisms of operads 
    should prove  interesting.
\end{description}

\subsection*{Related work}  In \cite{prezma} Prezma studied h-normal maps of (topological) loop spaces.  In particular, he characterized h-normal loop maps $\Om f: \Om X\to \Om Y$ as those for which there exists a simplicial loop space $\Gamma_{\bullet}$ such that $\Gamma_{0}\sim \Om Y$ and the canonical actions of $\Om Y$ on $\Gamma_{\bullet}$ and on $\operatorname{Bar}_{\bullet}(\Om X, \Om Y)$ are equivalent.  It follows from this characterization that if $L$ is an endofunctor of the category of topological spaces that preserves homotopy equivalences and such that the natural map $L(X\times Y) \to L(X)\times L(Y)$ is a homotopy equivalence for all $X$ and $Y$, then $L(\Om f)$ is h-normal whenever $\Om f$ is h-normal.

\subsection*{Notation and conventions}

\begin{itemize}
\item Let $\cat M$ be a small category, and let $A,B\in \ob \cat M$.  In these notes, the set of morphisms from $A$ to $B$ is denoted $\cat M(A,B)$.  The identity morphism on an object $A$ is often denoted $A$ as well.
\item If $X$ is a simplicial set, the $C_{*}X$ denotes the normalized chain complex on $X$, endowed with its usual coalgebra structure.
\item If $\Bbbk$ is a commutative ring, then $\cat {Ch}_{\Bbbk}$ is the category of chain complexes of $\Bbbk$-modules, while $\cat {dgProj}_{\Bbbk}$ is the full subcategory determined by  those complexes that are degreewise $\Bbbk$-projective and finitely generated.
\item The symbols $\eta$ and $\ve$ are always used to denote either the unit and augmentation of a monoid or the coaugmentation and counit of a comonoid.  The sense in which they are used on each occasion should be clear from context.
\end{itemize}

\section{Twisted homotopical categories}

In this section we consider categories admitting  twisting structures that are compatible in a reasonable way with a notion of weak equivalence. We refer the reader to Appendix \ref{sec:twisting} for a detailed introduction to twisting structures, where we fix all of the notation and terminology used in the remainder of the paper.

\subsection{Homotopical categories}

In \cite{dwyer-hirschhorn-kan-smith}, the authors developed and studied the following, minimalist framework for doing homotopy theory.

\begin{defn} A \emph{homotopical category} consists of a category $\cat M$ together with a distinguished class $\mathsf W$ of morphisms containing all identity maps and such that the ``two out of six'' property holds, i.e.,  for any sequence of morphisms
\[
W\xrightarrow r X\xrightarrow s Y\xrightarrow t Z
\]
in $\cat M$,
\[
sr, ts\in \mathsf W \Longrightarrow r,s,t, tsr\in \mathsf W.
\]
The elements of $\mathsf W$ are called \emph{weak equivalences} and denoted $\xrightarrow \sim$.

A functor between homotopical categories that preserves weak equivalences is also called \emph{homotopical}.
\end{defn}

A homotopical category has just enough structure to allow for a homotopy theory of its objects and morphisms, i.e.,  the following definition makes sense.

\begin{defn}  Let $(\cat M, \mathsf W)$ be a homotopical category.   Its \emph{homotopy category}, denoted $\Ho \cat M$, is the localization of $\cat M$ at $\mathsf W$.
\end{defn}

One can construct $\Ho\cat M$ explicitly, setting  $\ob \Ho \cat M=\ob \cat M$, while for all objects $X$ and $Y$,  $\Ho \cat M(X,Y)$ is the set of equivalence classes of zigzags
 of arrows in $\cat M$ linking $X$ to $Y$ such that every backward arrow is a weak equivalence, under the equivalence relation generated by omitting identity maps, replacing adjacent maps with the same orientation by their composite, and omitting pairs of adjacent maps with opposite orientation but the same label.

\begin{exs}
\begin{enumerate}
\item  The category of simplicial sets, endowed with its usual weak equivalences, is a homotopical category

\item The category of chain complexes over any ring is a homotopical category when $\mathsf W$ is chosen to be the class of all quasi-isomorphisms, i.e., chain maps inducing isomorphisms in homology.

\item Let $\cat D$ be a small category. If $(\cat M, \mathsf W)$ is a homotopical category, then the diagram category $\cat M^{\cat D}$ is as well, where the associated weak equivalences are the natural transformations $\tau: F\to G$ such that each component $\tau_{d}:F(d)\to G(d)$ is a weak equivalence, where $d\in \ob \cat D$.
\end{enumerate}
\end{exs}

We now specify the compatibility we require between homotopical structure and twisting structure.  Note that when we say that a morphism of structured objects in $\cat M$ (i.e., of monoids, comonoids, modules, etc.) is a weak equivalence, we mean that the underlying morphism in $\cat M$ is a weak equivalence.

\begin{rmk}  Let $(\cat M, \mathsf W)$ be a homotopical category, which is also endowed with a monoidal structure.  We can then define the structure of a homotopical category on $\cat {Bun}$ by saying that a morphism
\[
\xymatrix {A\ar [d]_{\alpha}\ar [r]^i&N\ar [d]_{\gamma}\ar [r]^p &C\ar [d]_{\beta}\\ A'\ar [r]^{i'}&N'\ar [r]^{p'} &C'}
\]
of mixed bundles is a weak equivalence if $\alpha$, $\beta$,  and $\gamma$ are all weak equivalences.
\end{rmk}

\subsection{Twisting structures and  weak equivalences} We now formulate the main definition of this section, which consists of
a sequence of compatibility  requirements between the given twisting structure and  the class of weak equivalences.

\begin{defn}\label{defn:thc}  A \emph{twisted homotopical category} is a homotopical category $(\cat M, \mathsf W)$ such $\cat M$ also admits a monoidal stucture $(\cat M, \otimes , I)$ endowed with a twistable triple $(\cat {mon}, \cat {comon}, \operatorname{mix})$ and a twisting structure  $(\Om, \Bar, \zeta, \xi)$, which is compatible with $\mathsf W$ in the following sense.
\begin{enumerate}
\item the unit and counit of the $(\Om , \Bar)$-adjunction are natural weak equivalences;
\item $\Om $ and $\Bar$ are both homotopical functors;
\item for all $A\in \ob \cat {mon}$, the composite
\[
\U A \xrightarrow {q_{A}} \Bar A \xrightarrow {\ve_{\Bar A}} I
\]
is a weak equivalence;
\item for all $C\in \cat{comon}$, the composite
\[
I\xrightarrow {\eta _{\Om C}} \Om C \xrightarrow {i_{C}} \P C
\]
is a weak equivalence;
\item  if $f:A\xrightarrow \sim A'$ is weak equivalence in $\cat {mon}$, and $\zeta=(A\to N\to C)$ is any classifiable biprincipal bundle, then the natural map $\zeta \to f_{*}(\zeta)$ (cf. Remark \ref{rmk:bndlmap}) is a weak equivalence of bundles; and
\item if $g:C'\xrightarrow \sim C$ is weak equivalence in $\cat {comon}$, and $\zeta=(A\to N\to C)$ is any classifiable biprincipal bundle, then the natural map $g^*(\zeta) \to \zeta$ (cf. Remark \ref{rmk:bndlmap}) is a weak equivalence of bundles.
\end{enumerate}

\end{defn}

\begin{rmk}  To simplify the presentation henceforth, we denote twisted homotopical categories simply as $\cat M$ and suppress explicit mention of the rest of the structure in the notation.
\end{rmk}

\begin{rmk} Since $q_{A}:\U A \to \Bar A$ and $\ve_{\Bar A}:\Bar A\to I$ are both morphisms of right $A$-modules (cf. Remark \ref{rmk:biprin}), condition (3) says that $\U A$ is resolution of $I$ as a right $A$-module.  Similarly, condition (4) implies that $\P C$ is a resolution of $I$ as a left $C$-comodule.
\end{rmk}

\begin{rmk}\label{rmk:Borel-inv} Condition (5) implies that a commutative diagram in $\cat {mon}$
\[
\xymatrix {A \ar[d]_{}^\alpha_{\sim}\ar [r]^f& A'\ar [d]_{}^{\alpha'}_{\sim}\\ B\ar [r]^{g}& B'},
\]
in which the vertical arrows are weak equivalences, induces a weak equivalence of Borel quotients
\[
\alpha'\hquot \alpha: A'\hquot A \xrightarrow\sim B'\hquot B,
\]
since $(\alpha, \alpha')_{*}: f_{*}\big( \zeta (A)\big) \to g_{*}\big( \zeta(B)\big)$  can be identified with the composite
\[
f_{*}\big(\zeta (A)\big) \xrightarrow {\sim} \alpha'_{*}f_{*}\big(\zeta (A)\big)\cong (\Bar \alpha)^*(\Bar g)^*\big(\zeta (B')\big)\xrightarrow \sim (\Bar g)^*\big(\zeta (B')\big).
\]
The factorization above depends heavily on the twistability of $(\cat {mon}, \cat {comon}, \operatorname{mix})$, as well as on axiom (2) of Definition \ref{defn:twist}, which together give rise to a sequence of isomorphisms
{\small \[
 \alpha'_{*}f_{*}\big(\zeta (A)\big)\cong(\alpha'f)_{*}\big(\zeta (A)\big)=(g\alpha)_{*}\big(\zeta (A)\big)\cong \big(\Bar (g\alpha)\big)^*\big(\zeta (B')\big)\cong (\Bar \alpha)^*(\Bar g)^*\big(\zeta (B')\big).
\]}

We are therefore justified in regarding the Borel quotient of a monoid map in a twisted homotopical category as its \emph{homotopy quotient}, as the construction is an invariant of weak equivalence, especially since condition (2)  can be understood to mean that $\U A$ is right $A$-module resolution of $I$, as in the previous remark.

Similarly, it follows from condition (6) that a commutative diagram in $\cat {comon}$
\[
\xymatrix {C \ar[d]_{}^\beta_{\sim}\ar [r]^f& C'\ar [d]_{}^{\beta'}_{\sim}\\ D\ar [r]^{g}& D'},
\]
in which the vertical arrows are weak equivalences, induces a weak equivalence of Borel kernels
\[
\beta \hker \beta ': C\hker C' \xrightarrow \sim D\hker D',
\]
whence our vision of the Borel kernel as a \emph{homotopy kernel}, when working in a twisted homotopical category.  Interpreting condition (3) to mean that $\P C$ is a left $C$-comodule resolution of $I$, as in the previous remark, further substantiates this vision.
\end{rmk}

\begin{rmk}  If $\cat M$ is a twisted homotopical category, then so is the category $\cat M^{\cat D}$ of $\cat D$-shaped diagrams in $\cat M$, for any small category $\cat D$.  All required structure is defined objectwise in $\cat {M^D}$.
\end{rmk}

The lemma below, which relates Nomura-Puppe and dual Nomura-Puppe sequences, is amusing in its own right and proves very helpful in understanding the relationship between normality and conormality up to homotopy in the next section.

\begin{lem}\label{lem:amusing} Let $\cat M$ be a twisted homotopical category.
\begin{enumerate}
\item For every morphism $f:A\to A'$ in $\cat {mon}$, there is a weak equivalence of biprincipal bundles
\[
\xymatrix {\Om \Bar A' \ar[d]_{v_{A}}^\sim \ar[r]& \Bar A'\hker \Bar A \ar [d]^\sim \ar[r]& \Bar A\ar[d]^=\\
A' \ar [r]& A'\hquot A\ar [r]&\Bar A.}
\]
\item For every morphism $g:C'\to C$ in $\cat {comon}$, there is a weak equivalence of biprincipal bundles
\[
\xymatrix{\Om C \ar [d]^=\ar[r]& C\hker C'\ar[d]^\sim \ar[r]&C'\ar[d]_{u_{C}}^\sim\\
\Om C\ar [r] &\Om C\hquot \Om C' \ar[r] & \Bar \Om C'.}
\]
\end{enumerate}
\end{lem}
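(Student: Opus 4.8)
The plan is to prove part (1) directly and then obtain part (2) by the duality that permeates the entire theory of twisting structures; no separate argument should be needed. For part (1), the key structural fact is that the classifying bundle $\zeta(\Bar A) = \big(\Bar A \to \U \Bar A \to \Om \Bar A\big)$ is, by construction, a classifiable biprincipal bundle over $\Bar A$, and by Definition \ref{defn:twist} the pullback $(\Bar f)^*\big(\zeta(\Bar A)\big)$ along $\Bar f\colon \Bar A\to \Bar A'$ is precisely the bundle whose total object is the Borel kernel $\Bar A'\hker \Bar A$. So the bottom row of the diagram in part (1), namely $\Om \Bar A' \to \Bar A'\hker \Bar A \to \Bar A$ — wait, I should instead recognize that the Nomura--Puppe sequence of $f$ is, by the \emph{first} commuting diagram in Lemma \ref{lem:amusing}'s statement as quoted in the introduction, already identified with the dual Nomura--Puppe sequence of $\Bar f$ up to weak equivalence. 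Concretely, I would build the comparison map of bundles using the counit $v_A := \ve_{\Om\Bar A}$-type map; the relevant map is the adjunction unit/counit $\Om\Bar A' \to A'$, which is a weak equivalence by axiom (1) of Definition \ref{defn:thc}. This is the left-hand vertical arrow $v_A$.

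The key steps, in order, are: (i) Identify the top row as the dual Nomura--Puppe sequence of the comonoid map $\Bar f\colon \Bar A \to \Bar A'$ in $\cat{comon}$ — this requires knowing that $\Bar A, \Bar A' \in \cat{comon}$ and that $\del$, $\iota$ for this comonoid map are exactly $\Om\Bar A' \to \Bar A'\hker \Bar A \to \Bar A$; this is definitional from the construction of the dual Nomura--Puppe sequence. (ii) Identify the bottom row as the Nomura--Puppe sequence of $f$, which it visibly is by Definition \ref{defn:puppe}, once one observes that $A'\hquot A$ is by definition the total object of $f_*\big(\zeta(A)\big)$ and $\delta_f$, $\pi_f$ are the induced structure maps. (iii) Construct the vertical comparison. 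The rightmost square commutes on the nose with identity vertical map $\Bar A = \Bar A$; the middle vertical map $\Bar A'\hker \Bar A \xrightarrow{\sim} A'\hquot A$ should be produced as a map of bundles covering the adjunction weak equivalence $v_A\colon \Om\Bar A' \xrightarrow{\sim} A'$ downstairs and $\mathrm{id}_{\Bar A}$ upstairs, using the universal property of the pullback bundle $(\Bar f)^*\zeta(\Bar A)$ against the pushforward bundle $f_*\zeta(A)$ — here one unwinds that both bundles classify, via $\zeta$ and $\xi$, essentially "the same" twisting datum transported along the equivalence $\Om\Bar A \xrightarrow{\sim} A$. (iv) Conclude that the middle vertical is a weak equivalence: this follows from axiom (5) of Definition \ref{defn:thc} (Borel-quotient invariance under weak equivalences of monoids, as spelled out in Remark \ref{rmk:Borel-inv}), applied to the weak equivalence $v_A\colon \Om\Bar A\xrightarrow{\sim} A$, together with axioms (2)–(4) to handle the total-object comparison; the two-out-of-six property then upgrades the map on total objects to a weak equivalence of bundles. (v) Deduce part (2) by applying part (1) in the "mirror" twisted homotopical category, or equivalently by running the dual construction verbatim: replace $\Bar$ by $\Om$, monoids by comonoids, Borel quotients by Borel kernels, and invoke axioms (4) and (6) in place of (3) and (5); the comparison map $u_C\colon C' \xrightarrow{\sim} \Bar\Om C'$ is the adjunction unit, a weak equivalence by axiom (1).

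The main obstacle I anticipate is step (iii): producing the middle vertical \emph{as a map of biprincipal bundles}, not merely as a map on total objects, and checking it is compatible with all the module/comodule structures. The subtlety is that $\Bar A'\hker \Bar A$ is defined as a pullback in $\cat{comon}$-land while $A'\hquot A$ is a pushout-type Borel construction in $\cat{mon}$-land, and bridging them requires carefully chasing through the twistability isomorphisms of Definition \ref{defn:twist} — the same sequence of natural isomorphisms exploited in Remark \ref{rmk:Borel-inv}, now applied with $\alpha = v_A$ (or its homotopy inverse) rather than a strict isomorphism. One must verify that the relevant "change of structure group along a weak equivalence" maps exist at the level of classifiable bundles and are themselves classifiable, so that axioms (5) and (6) apply. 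Once the bundle map is in hand, that it is a weak equivalence is comparatively routine, following the pattern already laid out in Remark \ref{rmk:Borel-inv}. The final identification of the displayed two diagrams with the Nomura--Puppe and dual Nomura--Puppe sequences, as asserted in the paragraph preceding the lemma, is then immediate by concatenating part (1), part (2), and the definitions.
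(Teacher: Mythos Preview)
Your plan is essentially correct and follows the paper's approach: one identifies the top row as $(\Bar f)^*\big(\xi(\Bar A')\big)$ (you wrote $\zeta(\Bar A)$, which is a slip --- $\zeta$ takes monoids), pushes forward along the counit weak equivalence $v_{A'}\colon \Om\Bar A' \xrightarrow{\sim} A'$ using axioms (1) and (5) of Definition \ref{defn:thc}, and then applies axioms (1) and (2) of Definition \ref{defn:twist} together with Remark \ref{rmk:commute} to obtain an \emph{isomorphism} with $f_*\big(\zeta(A)\big)$, which is the bottom row. Your step (iv) overcomplicates matters: axioms (2)--(4) of Definition \ref{defn:thc} and the two-out-of-six property are not needed, since axiom (5) already delivers a weak equivalence of bundles and the remaining comparisons are strict isomorphisms coming from the twisting-structure axioms.
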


\begin{proof} We prove (1) and leave the dual proof of (2) to the reader.
There is a diagram of morphisms of biprincipal bundles
{\small \[
\xymatrix{&&&\Bar A'\hker \Bar A\ar@{=}[d]\ar[dr]\\
(\Bar f)^*\big(\xi (\Bar A')\big)\ar [d]^\sim&=&\big( \Om\Bar A' \ar[r]\ar[d]_{v_{A}}^\sim\ar[ur]&\Bar A\square_{\Bar A'}\P\Bar A'\ar [r]\ar [d]^\sim &\Bar A\ar [d]^=\big)\\
(v_{A})_{*}(\Bar f)^*\big(\xi (\Bar A')\big)\ar [d]^\cong&=&\big( A' \ar[r]\ar[d]_{=}&\Bar A\square_{\Bar A'}\P\Bar A'\ot_{\Om\Bar A'}A'\ar [r]\ar [d]_{=} &\Bar A\ar [d]^=\big)\\
(\Bar f)^*\big(\zeta (A')\big)\ar[d]^\cong&=&\big( A' \ar[r]\ar[d]_{=}&\Bar A\square_{\Bar A'}\U A'\ar [r]\ar [d]_{\cong} &\Bar A\ar [d]^=\big)\\
f_{*}\big(\zeta (A)\big)&=&\big( A' \ar[r]\ar[dr]&\U A\ot_{A}A'\ar [r]\ar@{=}[d]&\Bar A.\big)\\
&&&A'\hquot A\ar[ur]}
\]}

The topmost vertical arrow is a weak equivalence by axioms (1) and (5) of Definition \ref{defn:thc}.  The second vertical arrow is the isomorphism obtained by applying $(\Bar f)^*$ to the isomorphism guaranteed by axiom (1) of Definition \ref{defn:twist} and recalling that $(v_{A})_{*}(\Bar f)^*=(\Bar f)^*(v_{A})_{*}$ (Remark \ref{rmk:commute}). Finally, the third vertical arrow is exactly the isomorphism guaranteed by the axiom (2) of Definition \ref{defn:twist}.
\end{proof}

\subsection{Examples}\label{sec:ex-twist-htpic}
\subsubsection{Simplicial sets}  It is well known that the unit and counit of the $(\G, \overline\W)$-adjunction are natural weak equivalences, that $\overline\W$ and $\G$ are homotopical, and that $\overline \W G\times_{ {\nu_{G}}} G$ and $X\times_{{\tau_{X}}}\G X$ are both acyclic \cite{may}.  Finally,  we can apply the long exact homotopy sequence of a twisted cartesian product to prove conditions (5) and (6) of Definition \ref{defn:thc}, as follows.

Let $f:G\to G'$ be a weak equivalence of simplicial groups, and let 
\[
\zeta=(G\to X\times_{\nu _{G}g} G\to X)
\]
 be a biprincipal bundle classified by a simplicial map $g:X\to \overline \W G$. The natural morphism of bundles from $\zeta$ to $f_{*}(\zeta)$ is then
\[
\xymatrix{G\ar [r]\ar [d]_{f}^\sim& X \times_{{\nu_{G}g}} G \ar[r] \ar [d] _{X\times f}&X\ar [d]_{}^=\\
G'\ar [r]&X  \times_{f\nu_{G}g} G' \ar[r] & X.}
\]
Since biprincipal bundles are, in particular, twisted cartesian products, and two of the three vertical maps are weak equivalences, the third must be as well, proving condition (5).  Condition (6) is proved similarly.

\subsubsection{Chain complexes}  The unit and counit of the $(\Om, \Bar)$-adjunction are well known to be natural weak equivalences, just as $\Om$ and $\Bar $ are well known to be homotopical.  Moreover, $\Bar A\otimes _{{t_{\Bar}}}A$ and $C\otimes_{{t_{\Om}}} \Om C$ are, respectively, the acyclic bar and acyclic cobar constructions, which are easily seen to be contractible.  Finally, conditions (5) and (6) can be proved by a simple argument using Zeeman's comparison theorem, as follows.

Let $f:A\to A'$ be a quasi-isomorphism of chain algebras, and let 
\[
\zeta=(A\to C\otimes_{{t_{\Bar}g}} A \to C)
\]
 be a biprincipal bundle classified by a chain coalgebra map $g:C\to \Bar A$.  The natural morphism of bundles from $\zeta$ to $f_{*}(\zeta)$ is then
\[
\xymatrix{A\ar [r]\ar [d]_{f}^\sim& C\otimes_{{t_{\Bar}g}} A\ar[r] \ar [d] _{C\otimes f}&C\ar [d]_{}^=\\
A'\ar [r]& C \otimes_{{ft_{\Bar}g}}  A' \ar[r] &C.}
\]
Applying Zeeman's comparison theorem to the obvious filtration on the total object of each bundle, we see that since  two of the three vertical maps are weak equivalences, the third must be as well, proving condition (5).  Condition (6) is proved similarly.

\section{H-normal and h-conormal maps}

In this section we define homotopical notions of normality and conormality in a twisted homotopical category, inspired by the example of the Nomura-Puppe sequence for loop spaces and discrete groups (\ref{eqn:NP}).
We study the elementary properties of these notions and provide explicit examples of classes of such maps in the categories of simplicial sets and of chain complexes.

Throughout this section we suppose that $\cat M$ is a twisted homotopical category, with respect to some fixed twistable triple $(\cat {mon}, \cat {comon}, \operatorname{mix})$.  All homotopy quotients and homotopy kernels are defined in terms of the fixed twisting structure $(\Om, \Bar, \zeta, \xi)$ on $\cat M$.

\subsection{Definitions and elementary properties}

Our definition of homotopy nor\-mality is motivated by the fact that a subgroup $N$ of a group $G$ is normal if and only if the quotient set $G/N$ of orbits of the $N$-action on $G$ admits a group structure such that the quotient map $G\to G/N$ is a homomorphism.  We must, however, replace quotients by homotopy quotients and consider multiplicative structure up to homotopy, in some suffciently rigid sense.

We formulate h-normality in terms of the following sorts of sequences.

\begin{defn} An \emph{extended bundle} in $\cat M$ is a sequence
\[
A\xrightarrow j M \xrightarrow d N \xrightarrow p C
\]
of morphisms in $\cat M$, where
\begin{itemize}
\item $A\in \ob \cat{mon}$;
\item $M$ is a right $A$-module, and $j$ is a morphism of $A$-modules;
\item $C\in \ob \cat {comon}$; and
\item $N$ is a left $C$-comodule, and $p$ is a morphism of $C$-comodules.
\end{itemize}
A morphism of extended bundles  consists of a commuting diagram in $\cat M$
\[
\xymatrix{A \ar [d]_{\alpha}\ar[r]^{j}&M\ar [d]_{\mu}\ar[r]^d&N\ar[d]_{\nu}\ar[r]^p&C\ar[d]_{\beta}\\
A'\ar[r]^{j'}&M'\ar[r]^{d'}&N'\ar[r]^{p'}&C'}
\]
in which the rows are extended bundles, $\alpha$ is a monoid morphism, $\mu$ is a morphism of $A$-modules, $\beta$ is a comonoid morphism, and $\nu$ is a morphism of $C'$-comodules.

 A morphism $(\alpha, \mu, \nu, \beta): \tau \to \tau'$ of extended bundles is an \emph{elementary equivalence}, denoted $\tau \xrightarrow \sim \tau'$, if every component is a weak equivalence.  An \emph{equivalence of extended bundles} is a zigzag of elementary equivalences.
\end{defn}

\begin{rmk}  Note that the morphism $d$ in the definition of extended bundle is assumed only to be a morphism in $\cat M$.
\end{rmk}

The main examples of extended bundle  sequences arise from the Nomura-Puppe and dual Nomura-Puppe sequences (Definition \ref{defn:puppe}).

\begin{ex} For any morphism $f:A\to A'$ in $\cat{mon}$, the extended bundle
\[
\tau(f)=(A' \xrightarrow {\pi_{f}} A'\hquot A \xrightarrow {\delta_{f}} \Bar A \xrightarrow {\Bar f} \Bar A')
\]
is the \emph{truncated Nomura-Puppe sequence} associated to $f$.
Dually, for any morphism $g:C'\to C$ in $\cat{comon}$, the extended bundle
\[
\theta(g)=(\Om C' \xrightarrow {\Om g} \Om C \xrightarrow {\del _{g}} C\hker C' \xrightarrow {\iota_{g}} C')
\]
is the \emph{truncated dual Nomura-Puppe sequence} associated to $g$.
\end{ex}

We can now formulate our definition of homotopy normality and conormality.

\begin{defn} \label{defn:normal}{\em Let $f:A\to A'$ and $g:C'\to C$  be morphisms in $\cat{mon}$ and $\cat {comon}$, respectively.  The pair $(f,g)$ is \emph{normal} if  the extended bundles $\tau(f)$ and $\theta(g)$ are equivalent.

If $(f,g)$ is a normal pair, then $f$ is \emph{h-normal} with associated \emph{normality structure} $g$, while $g$ is \emph{h-conormal} with associated \emph{conormality structure} $f$.}
\end{defn}
\smallskip
\begin{rmk}\label{rmk:big-diagram} The definition above implies that if $(f,g)$ is a normal pair, then the dual Nomura-Puppe sequence of $g$ and the Nomura-Puppe sequence of $f$ (the second and next-to-last columns below) fit into the following sort of commutative diagram.

\[
\xymatrix{&&&&&A\ar[d]_{f}\\
\Om C'\ar [d]^{\Om g}&\Om C'\ar [d]^{\Om g}\ar [l]^=&\bullet \ar[d]\ar [l]^\sim \ar [r]_{\sim}&\cdots&\bullet \ar[d]\ar [l]^\sim\ar [r]_{\sim}&A'\ar [d]_{\pi_{f}} &\Om \Bar A'\ar [l]^\sim\ar[d]_{\del_{\Bar f}}\\
\Om C\ar[d]^{\pi_{\Om g}}&\Om C\ar [d]^{\del_{g}}\ar [l]^=&\bullet\ar[d] \ar [l]^\sim \ar [r]_{\sim}&\cdots&\bullet \ar[d]\ar [l]^\sim\ar [r]_{\sim}&A'\hquot A\ar [d]_{\delta_{f}}&\Bar A'\hker \Bar A\ar [d]_{\iota_{\Bar f}}\ar[l]^\sim\\
\Om C\hquot \Om C'\ar[d]^{\delta_{\Om g}}&C\hker C'\ar [d]^{\iota _{g}}\ar [l]^\sim&\bullet\ar[d]\ar [l]^\sim\ar [r]_{\sim}&\cdots&\bullet\ar[d] \ar [l]^\sim\ar [r]_{\sim}&\Bar A\ar [d]_{\Bar f}&\Bar A\ar[l]^=\ar [d]_{\Bar f}\\
\Bar \Om C'&C'\ar[d]_{g}\ar [l]^\sim&\bullet \ar [l]^\sim \ar [r]_{\sim}&\cdots&\bullet \ar [l]^\sim\ar [r]_{\sim}&\Bar A'&\Bar A'\ar [l]^=\\
&C.}
\]
Note that we have applied Lemma \ref{lem:amusing} to obtain  the first and last columns of equivalences above.

Looking at the second and fifth columns (from the right) of this diagrams, we see that a monoid morphism $f:A\to A'$ is h-normal if  its associated Nomura-Puppe sequence can be recovered, up to weak equivalence, from the dual Nomura-Puppe sequence of a comonoid map.  In particular, the Borel quotient $A'\hquot A$ of $f$ is weakly equivalent to a monoid.

Dually, a comonoid morphism $g$ is h-conormal if its associated dual Nomura-Puppe sequence can be recovered, up to homotopy, from the Nomura-Puppe sequence of a monoid map.  In particular, the Borel kernel $C\hker C'$ of $g$ is weakly equivalent to a comonoid.

\end{rmk}

\begin{rmks}\label{rmks:norm} \begin{enumerate}
\item Though we do not provide a general treatment of functoriality (namely, behavior under monoidal functors) of h-normality and h-conormality in this paper, the relation between the simplicial and chain complex examples treated later in this section gives a indication of the sort of results that can be expected. In general,
    however, if  $(f,g)$ is a normal pair $\cat M$  and  $L:\cat M\to \cat M'$ is a monoidal, homotopical functor between twisted homotopical categories, it may happen that  the  map $Lf$ is normal, though its normality
    structure might   {\em not}  be $Lg.$
\item It is an immediate consequence of Definition \ref{defn:normal} that  h-normality and h-conormality are strictly  dual notions.
\item Remark \ref{rmk:Borel-inv} implies that h-normality and h-conormality are homotopy invariant notions.
\item There may be many, nonequivalent (co)normality structures associated to a given h-(co)normal map.  For example, in the case of discrete groups, several distinct crossed module structures may correspond to the same homomorphism $N\to G.$

\end{enumerate}
\end{rmks}

We next prove a result giving a sufficient condition for h-normality, which clarifies somewhat the relation between our definition of h-normality and that of normal subgroups: if the homotopy quotient of a monoid morphism $f$ itself admits a ``nice enough'' monoid structure, then $f$ is h-normal.  Note that this implication is usually not reversible, as the homotopy quotient of an h-normal map is supposed only to be weakly equivalent as a module to a monoid.    The existence of a such a ``nice'' monoid structure on the homotopy quotient can thus be viewed as a strong variant of h-normality.
\begin{lem}\label{lem:rigidnormal} Let $f:A\to A'$ be a morphism in $\cat {mon}$.  If $A'\hquot A$ admits a monoid structure with respect to which $\pi_{f}:A'\to A'\hquot A$ is a monoid morphism, and there is a weak equivalence of left $\Bar A'$-comodules $\tilde\pi:(A'\hquot A)\hquot A' \xrightarrow\sim \Bar A$ such that
\[
\xymatrix{A'\hquot A \ar[d]_{=}\ar[r]^(0.4){\pi_{\pi_{f}}}& (A'\hquot A)\hquot A'\ar [d]_{\sim}^{\tilde \pi}\ar[r]^(0.6){\delta_{\pi_{f}}}&\Bar A'\ar[d]_{=}\\
A'\hquot A \ar [r]^{\delta_{f}}& \Bar A\ar [r]^{\Bar f}& \Bar A'}
\]
commutes, then $f$ is h-normal with normality structure $\Bar \pi_{f}:\Bar A'\to \Bar (A'\hquot A)$.
\end{lem}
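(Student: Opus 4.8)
The goal is to exhibit an equivalence of extended bundles $\tau(f)\xrightarrow{\sim}\theta(g)$ where $g=\Bar\pi_{f}\colon \Bar A'\to \Bar(A'\hquot A)$, so that $(f,g)$ is a normal pair in the sense of Definition \ref{defn:normal}. The plan is to compute $\theta(g)$ explicitly and then compare it to $\tau(f)$ using Lemma \ref{lem:amusing} together with the hypotheses on $\pi_{f}$ and $\tilde\pi$.

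First I would write out the truncated dual Nomura-Puppe sequence of $g=\Bar\pi_{f}$. Since $g\colon \Bar A'\to \Bar(A'\hquot A)$ is of the form $\Bar h$ for the monoid morphism $h=\pi_{f}\colon A'\to A'\hquot A$, part (2) of Lemma \ref{lem:amusing} applies: $\theta(\Bar h)$ — that is, the sequence $\Om\Bar A'\xrightarrow{\Om\Bar\pi_{f}}\Om\Bar(A'\hquot A)\to \Bar(A'\hquot A)\hker\Bar A'\to \Bar A'$ — is weakly equivalent, as an extended bundle, to the Nomura-Puppe sequence of $\Om\Bar\pi_{f}$, hence (using that the unit/counit of the $(\Om,\Bar)$-adjunction are natural weak equivalences, axiom (1) of Definition \ref{defn:thc}) to $\tau(h)=\tau(\pi_{f})=\big(A'\hquot A\xrightarrow{\pi_{\pi_{f}}}(A'\hquot A)\hquot A'\xrightarrow{\delta_{\pi_{f}}}\Bar A'\xrightarrow{\Bar\pi_{f}}\Bar\Bar(A'\hquot A)\big)$, up to the coaugmentation equivalences $v$ and $u$ of Lemma \ref{lem:amusing}. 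Thus, after this first reduction, it suffices to produce an equivalence of extended bundles between $\tau(f)$ and $\tau(\pi_{f})$.

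Next I would build the comparison between $\tau(f)$ and $\tau(\pi_{f})$ directly from the hypothesized commuting square. The square in the statement already identifies the last two nonidentity arrows: it gives a weak equivalence $\tilde\pi\colon(A'\hquot A)\hquot A'\xrightarrow{\sim}\Bar A$ of left $\Bar A'$-comodules making
\[
\xymatrix{A'\hquot A \ar[d]_{=}\ar[r]^(0.4){\pi_{\pi_{f}}}& (A'\hquot A)\hquot A'\ar [d]_{\sim}^{\tilde \pi}\ar[r]^(0.6){\delta_{\pi_{f}}}&\Bar A'\ar[d]_{=}\ar[r]^(0.45){\Bar\pi_{f}}& \Bar\Bar(A'\hquot A)\ar[d]\\
A'\hquot A \ar [r]^{\delta_{f}}& \Bar A\ar [r]^{\Bar f}& \Bar A'\ar[r]^(0.45){\eta_{\Bar A'}}& \Bar\Bar A'}
\]
commute on the first two squares; the rightmost square commutes by naturality of the unit $\eta$ of the $(\Om,\Bar)$-adjunction applied to $\Bar f$ — wait, more precisely I would instead truncate $\tau(\pi_f)$ at $\Bar A'$ and note that the relevant comparison of extended bundles only needs the four objects $A'\hquot A,\ (A'\hquot A)\hquot A',\ \Bar A',\ \Bar\Bar(A'\hquot A)$ versus $A',\ A'\hquot A,\ \Bar A,\ \Bar A'$; I match them via $\pi_{f}$ on the first slot (using that $\pi_{f}\colon A'\to A'\hquot A$ is by hypothesis a monoid morphism, so it is a legitimate monoid-morphism component), via $\tilde\pi$ on the module slot (it is a weak equivalence of modules by hypothesis), and via the adjunction unit on the comonoid slots. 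The required compatibility with the $C$-comodule and $A$-module structures in the definition of "morphism of extended bundles" follows because $\tilde\pi$ is assumed to be $\Bar A'$-colinear and the outer maps are (co)monoid morphisms.

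The step I expect to be the main obstacle is verifying that all the comparison maps I have written down genuinely assemble into a \emph{morphism of extended bundles}, i.e. that the module- and comodule-structure compatibilities hold — in particular, identifying the left $\Bar A'$-comodule structure on $(A'\hquot A)\hquot A'$ coming from $\tau(\pi_{f})$ with the one on $\Bar A$ coming from $\tau(f)$ along $\tilde\pi$, and checking that $\pi_{f}\colon A'\to A'\hquot A$ intertwines the right $A$-module structure on $A'$ with the right $(A'\hquot A)$-module structure on $A'\hquot A$ in a way compatible with the monoid map $f$ (this is where the hypothesis that $\pi_{f}$ is a monoid morphism does real work). Once these structural checks are in place, chaining the equivalence $\tau(f)\simeq\tau(\pi_{f})$ just produced with the equivalence $\tau(\pi_{f})\simeq\theta(\Bar\pi_{f})$ coming from Lemma \ref{lem:amusing}(2) yields an equivalence $\tau(f)\simeq\theta(g)$, so $(f,\Bar\pi_{f})$ is a normal pair and $f$ is h-normal with normality structure $\Bar\pi_{f}$, as claimed.
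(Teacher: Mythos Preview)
There is a genuine gap in your factorization through $\tau(\pi_{f})$. Recall that an elementary equivalence of extended bundles requires every component map---in particular the one on the monoid slot and the one on the comonoid slot---to be a weak equivalence. But $\tau(\pi_{f})$ has monoid slot $A'\hquot A$ and comonoid slot $\Bar(A'\hquot A)$, whereas $\tau(f)$ has monoid slot $A'$ and comonoid slot $\Bar A'$, and $\theta(\Bar\pi_{f})$ has monoid slot $\Om\Bar A'\sim A'$ and comonoid slot $\Bar A'$. The map $\pi_{f}\colon A'\to A'\hquot A$ that you propose to use on the monoid slot is indeed a monoid morphism, but it is \emph{not} a weak equivalence in general, so it cannot be a component of an elementary equivalence. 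For the same reason, neither $\theta(\Bar\pi_{f})\simeq\tau(\pi_{f})$ nor $\tau(\pi_{f})\simeq\tau(f)$ holds as an equivalence of extended bundles. (You also cite part (2) of Lemma~\ref{lem:amusing}; since $\pi_{f}$ is a monoid morphism you want part (1), but this is minor compared to the slot mismatch.)

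The paper avoids $\tau(\pi_{f})$ altogether. It inserts instead the extended bundle
\[
A' \xrightarrow{\pi_{f}} A'\hquot A \xrightarrow{\pi_{\pi_{f}}} (A'\hquot A)\hquot A' \xrightarrow{\delta_{\pi_{f}}} \Bar A',
\]
which shares the monoid slot $A'$, the module slot $A'\hquot A$, and the comonoid slot $\Bar A'$ with $\tau(f)$, differing only in the comodule slot. Lemma~\ref{lem:amusing}(1) applied to $\pi_{f}$ gives an elementary equivalence from $\theta(\Bar\pi_{f})$ to this intermediate bundle (via the counits $v_{A'}$, $v_{A'\hquot A}$ on the first two slots and the biprincipal-bundle comparison on the comodule slot), and the hypothesis on $\tilde\pi$ gives the elementary equivalence from the intermediate bundle to $\tau(f)$ (identity on three slots, $\tilde\pi$ on the comodule slot). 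Your instinct to combine Lemma~\ref{lem:amusing} with the hypothesized square was right; the fix is to choose the intermediate object so that only the comodule slot has to move.
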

\begin{proof} Recall from Lemma \ref{lem:amusing} that there is a weak equivalence of $\Bar A'$-comodules
$\Bar (A'\hquot A)\hker \Bar A' \xrightarrow \sim (A'\hquot A)\hquot A'$, fitting into a weak equivalence of biprincipal bundles.
There is therefore an equivalence of extended bundles
\[
\xymatrix{\Om \Bar A'\ar [r]^{v_{A'}}_{\sim}\ar [d]_{\Om \Bar \pi_{f}}&A'\ar [r]_= \ar [d]_{\pi_{f}}& A'\ar [d]^{\pi_{f}}\\
\Om \Bar (A'\hquot A)\ar [d]_{\del_{\Bar \pi_{f}}}\ar [r]^{v_{A'\hquot A}}_{\sim}&A'\hquot A\ar[r]_=\ar [d]_{\pi_{\pi_{f}}}&A'\hquot A \ar [d]^{\delta _{f}}\\
\Bar (A'\hquot A)\hker \Bar A'\ar [d]_{\iota_{\Bar \pi_{f}}}\ar [r]_{\sim}&(A'\hquot A)\hquot A' \ar [r]_{\sim}^{\tilde \pi}\ar[d]_{\delta_{\pi_{f}}}&\Bar A \ar [d]^{\Bar f}\\
\Bar A'\ar [r]^=&\Bar A'\ar [r]^=&\Bar A',}
\]
implying that $f$ is h-normal with normality structure $\Bar \pi_{f}:\Bar A'\to \Bar(A'\hquot A)$.
\end{proof}

The dual condition, which we prove holds in the simplicial case in Lemma \ref{lem:simpl-good}, is formulated as follows; we leave its strictly dual proof to the reader.

\begin{lem}\label{lem:rigidconormal}  Let $g:C'\to C$ be a morphism in $\cat{comon}$.  If $C\hker C'$ admits a comonoid structure with respect to which $\iota _{g}:C\hker C' \to C'$ is a morphism of comonoids, and there is a weak equivalence of $\Om C'$-modules $\tilde\iota:\Om C\to C'\hker (C\hker C')$ such that
\[
\xymatrix{\Om C' \ar[d]_{=}\ar [r]^{\Om g}&\Om C\ar [d]_{\sim}^{\tilde \iota}\ar[r]^{\del_{g}}& C\hker C'\ar [d]_{=}\\
\Om C'\ar[r]^(0.4){\del_{\iota_{g}}}&C'\hker(C\hker C')\ar[r]^(0.6){\iota_{\iota_{g}}}&C\hker C'}
\]
commutes, then $g$ is h-conormal with conormality structure $\Om \iota _{g}:\Om (C\hker C')\to \Om C'$.
\end{lem}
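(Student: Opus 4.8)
The plan is to dualize, step for step, the proof of Lemma \ref{lem:rigidnormal}, under the evident duality interchanging $\cat{mon}$ with $\cat{comon}$, $\Om$ with $\Bar$, $\hquot$ with $\hker$, the truncated Nomura--Puppe construction $\tau$ with its dual $\theta$, the structure maps $\pi,\delta$ with $\iota,\del$, and the counit of the $(\Om,\Bar)$-adjunction with its unit $u$. Concretely, I want to exhibit an equivalence of extended bundles between
\[
\theta(g)=\bigl(\Om C'\xrightarrow{\Om g}\Om C\xrightarrow{\del_{g}}C\hker C'\xrightarrow{\iota_{g}}C'\bigr)
\]
and
\[
\tau(\Om\iota_{g})=\bigl(\Om C'\xrightarrow{\pi_{\Om\iota_{g}}}\Om C'\hquot\Om(C\hker C')\xrightarrow{\delta_{\Om\iota_{g}}}\Bar\Om(C\hker C')\xrightarrow{\Bar\Om\iota_{g}}\Bar\Om C'\bigr);
\]
by Definition \ref{defn:normal} this says precisely that $(\Om\iota_{g},g)$ is a normal pair, hence that $g$ is h-conormal with conormality structure $\Om\iota_{g}$.

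First I would feed the comonoid morphism $\iota_{g}\colon C\hker C'\to C'$ into Lemma \ref{lem:amusing}(2). This yields a weak equivalence of biprincipal bundles whose middle component is a weak equivalence of right $\Om C'$-modules $C'\hker(C\hker C')\xrightarrow{\sim}\Om C'\hquot\Om(C\hker C')$, carrying $\del_{\iota_{g}}$ to $\pi_{\Om\iota_{g}}$ and intertwining $\iota_{\iota_{g}}$ with $\delta_{\Om\iota_{g}}$ along the unit $u_{C\hker C'}$. This is the exact dual of the appeal to Lemma \ref{lem:amusing}(1) (applied to $\pi_{f}$) in the proof of Lemma \ref{lem:rigidnormal}.

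Then I would assemble the dual of the large diagram in that proof: a commutative array of three extended bundles, the left-hand column being $\theta(g)$, the right-hand column being $\tau(\Om\iota_{g})$, and the middle column being
\[
\bigl(\Om C'\xrightarrow{\del_{\iota_{g}}}C'\hker(C\hker C')\xrightarrow{\iota_{\iota_{g}}}C\hker C'\xrightarrow{\iota_{g}}C'\bigr).
\]
The map from the left column to the middle column has components $\mathrm{id}$, $\tilde\iota$, $\mathrm{id}$, $\mathrm{id}$, and the map from the middle column to the right column has components $\mathrm{id}$, the $\Om C'$-module equivalence just obtained, $u_{C\hker C'}$, $u_{C'}$. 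Each of these is a weak equivalence --- $\tilde\iota$ by hypothesis, the two copies of $u$ by axiom (1) of Definition \ref{defn:thc}, and the remaining one by Lemma \ref{lem:amusing}(2) --- so both column maps are elementary equivalences, and one obtains an equivalence of extended bundles $\theta(g)\xrightarrow{\sim}(\text{middle})\xrightarrow{\sim}\tau(\Om\iota_{g})$.

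What is left is the commutativity of the array. The two squares in its top two rows commute by the two squares of the hypothesis diagram, which supply $\tilde\iota\circ\Om g=\del_{\iota_{g}}$ and $\iota_{\iota_{g}}\circ\tilde\iota=\del_{g}$, together with the left-hand square of the biprincipal-bundle equivalence from Lemma \ref{lem:amusing}(2); the squares in its bottom two rows commute by the right-hand square of that same equivalence, by naturality of the unit $u$, and trivially. The only real difficulty is bookkeeping --- keeping track of the various Nomura--Puppe structure maps and their duals, and checking that the right-module and left-comodule structures on the objects of the three extended bundles are compatible so that the two column maps are genuinely morphisms of extended bundles --- but each of these verifications is forced, being the literal dual of the corresponding step in the proof of Lemma \ref{lem:rigidnormal}.
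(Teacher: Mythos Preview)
Your proposal is correct and is precisely the strictly dual argument the paper intends: the paper itself gives no proof beyond ``we leave its strictly dual proof to the reader,'' and your three-column diagram of extended bundles---$\theta(g)$, the intermediate column built from $\del_{\iota_g}$ and $\iota_{\iota_g}$, and $\tau(\Om\iota_g)$---together with the appeal to Lemma~\ref{lem:amusing}(2) applied to $\iota_g$ and the hypothesized $\tilde\iota$, is the exact dual of the diagram in the proof of Lemma~\ref{lem:rigidnormal}.
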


\begin{rmk}[\emph{Normality of homotopy kernel maps}]\label{rmk:bimonoid-kernel} Continuing the comparison between the notions of normal subgroup and of h-normal monoid morphism, and motivated by the characterization of normal subgroups as kernels, we can ask under what conditions the homotopy kernel of a morphism of \emph{bimonoids} is h-normal.  Note that  we cannot even formulate such a question for a morphism of monoids, since the construction of the homotopy kernel requires comultiplicative structure.  The comultiplicative structure comes for free in the case of group homomorphisms, since the diagonal map endows any group with the structure of a bimonoid in the category of sets.

It is not difficult to see that if $H$ is a bimonoid in $\cat M$ such that $\P H$ admits a monoid structure with respect to which both its right $\Om H$-action and left $H$-coaction, as well as  the morphisms in the biprincipal bundle
\[
\xi(H)=(\Om H \xrightarrow {i_{H}} \P H \xrightarrow{p_{H}}H),
\]
are monoid morphisms,  then for all bimonoid maps $g: H'\to H$, the induced morphism 
\[
\iota _{g}:H\hker H' \to H'
\]
 is a monoid morphism.  In this case, it makes sense therefore to ask when $\iota_{g}$ is h-normal.

Dually if $H$ is a bimonoid in $\cat M$ such that $\U H$ admits a comonoid structure with respect to which both its right $H$-action and left $\Bar H$-coaction, as well as the morphisms in the biprincipal bundle
\[
\zeta(H)=(H \xrightarrow {j_{H}} \U H \xrightarrow{q_{H}}\Bar H)
\]
are comonoid morphisms, then for all bimonoid maps $f: H\to H'$, the induced morphism 
\[
\pi _{f}: H' \to H'\hquot H
\]
 is a comonoid morphism.  The question of when $\pi_{f}$ is h-conormal is thus meaningful in this case.

It follows from Corollary 3.6 in \cite{hess-levi} and its obvious dual that the conditions on $\P H$ and $\U H$ formulated above hold when $\cat M=\cat {dgProj}_{\Bbbk}$ and $H$ is any connected chain Hopf algebra.  We intend to study h-normality of homotopy kernels and h-conormality of homotopy quotients in $\cat {dgProj}_{\Bbbk}$ and similar categories in an upcoming paper.
\end{rmk}

\subsection{Examples}

We now present examples of h-normal and h-conormal maps.  We begin with a class of examples in a general twisted homotopical category, then consider the particular cases of simplicial sets and of chain complexes.

\subsubsection{A general class of examples}

We show that ``trivial extensions'' of monoids are h-normal, while ``trivial extensions'' of comonoids are h-conormal, at least under an additional hypothesis on $\cat M$.

\begin{prop}\label{prop:trivialext} Let $\cat M$ be a twisted homotopical category, with twisting structure $(\Om, \Bar, \zeta, \xi)$. If there are natural weak equivalences
\[
\Om (-\otimes -) \overset\sim\Longrightarrow \Om (-) \otimes \Om (-): \cat {Comon}^{\times 2} \to \cat {Mon}
\]
and
\[
\Bar (-) \otimes \Bar (-)\overset\sim\Longrightarrow \Bar (-\otimes -):\cat {Mon}^{\times 2} \to \cat {Comon},\]
 then
 \[
A\otimes \eta: A \to A\otimes B
\]
is h-normal with normality structure $\ve\otimes \Bar B :\Bar A\otimes \Bar B \to \Bar B$, for any augmented monoids $A$ and $B$, while
\[
\ve \otimes D: C\otimes D \to D
\]
is h-conormal with conormality structure $\Om C\otimes \eta: \Om C \to \Om C \otimes \Om D$, for any coaugmented comonoids $C$ and $D$.
\end{prop}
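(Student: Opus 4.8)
The plan is to produce, for the monoid map $A\otimes\eta:A\to A\otimes B$, an equivalence of extended bundles between its truncated Nomura-Puppe sequence $\tau(A\otimes\eta)$ and the truncated dual Nomura-Puppe sequence $\theta(\ve\otimes\Bar B)$ associated to the comonoid map $\ve\otimes\Bar B:\Bar A\otimes\Bar B\to\Bar B$. By Lemma~\ref{lem:amusing}(2), $\theta(\ve\otimes\Bar B)$ is equivalent (via a zigzag of weak equivalences of biprincipal bundles) to the truncated Nomura-Puppe sequence $\tau\big(\Om(\ve\otimes\Bar B)\big)$ of the monoid map $\Om(\ve\otimes\Bar B):\Om\Bar B\to\Om(\Bar A\otimes\Bar B)$. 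So it suffices to exhibit an equivalence of extended bundles between $\tau(A\otimes\eta)$ and $\tau\big(\Om(\ve\otimes\Bar B)\big)$. Here is where the two hypothesized natural weak equivalences enter: the first gives $\Om(\Bar A\otimes\Bar B)\xrightarrow{\sim}\Om\Bar A\otimes\Om\Bar B$, and composing with the counit of the $(\Om,\Bar)$-adjunction (which is a natural weak equivalence by axiom~(1) of Definition~\ref{defn:thc}) on each tensor factor identifies $\Om(\Bar A\otimes\Bar B)$ up to weak equivalence with $A\otimes B$, and $\Om\Bar B$ with $B$; under these identifications the map $\Om(\ve\otimes\Bar B)$ should correspond to $B\cong I\otimes B\xrightarrow{\eta\otimes B}A\otimes B$, i.e.\ (up to the evident swap of tensor factors) to a trivial extension again.

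The core of the argument is then to show that the Borel quotient $(A\otimes B)\hquot A$ — built from the classifying bundle $\zeta(A)=(A\to\U A\to\Bar A)$ by base change along $A\otimes\eta$ — is weakly equivalent, as a biprincipal bundle, to the one with total object $B\otimes\Bar A$ (or $\Bar A\otimes B$) and base $\Bar A$, with the module map $\pi_{A\otimes\eta}$ corresponding to $\eta\otimes\Bar A$ composed with the trivialization. Concretely, $(A\otimes B)\hquot A=\U A\otimes_A(A\otimes B)\cong\U A\otimes B$, and the composite $\U A\otimes B\to\Bar A\otimes B\xrightarrow{\ve\otimes\Bar A}$ twisted appropriately — using that $\ve_{\Bar A}\circ q_A:\U A\to I$ is a weak equivalence by axiom~(3) of Definition~\ref{defn:thc} — gives the needed equivalence. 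The connecting map $\delta_{A\otimes\eta}:(A\otimes B)\hquot A\to\Bar A$ and the map $\Bar(A\otimes\eta):\Bar A\to\Bar(A\otimes B)$ must be checked to match, under the identifications, the connecting map and the map $\Bar(\eta\otimes B)\simeq(\ve\otimes\Bar B)$-dual arising in $\tau(\Om(\ve\otimes\Bar B))$; the second hypothesized natural weak equivalence $\Bar A\otimes\Bar B\xrightarrow{\sim}\Bar(A\otimes B)$ is exactly what is needed to identify the last two terms of the two sequences and make the final square commute up to the allowed zigzag.

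I would organize the write-up as: (i) reduce, via Lemma~\ref{lem:amusing}, to comparing $\tau(A\otimes\eta)$ with $\tau\big(\Om(\ve\otimes\Bar B)\big)$; (ii) use the two naturality hypotheses plus the adjunction (co)unit to build weak equivalences on each of the four terms; (iii) assemble these into a morphism of extended bundles and check the three squares commute (the $A$-module, respectively $C$-comodule, structure maps being respected follows from naturality of the hypothesized transformations and the fact that base change of biprincipal bundles is functorial). The dual statement — that $\ve\otimes D:C\otimes D\to D$ is h-conormal with conormality structure $\Om C\otimes\eta$ — then follows formally by the strict duality of h-normality and h-conormality (Remarks~\ref{rmks:norm}(2)), applying the argument just given in the opposite category with $\Bar$ and $\Om$ interchanged, so I would simply say "by duality" rather than repeat it.

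The main obstacle I anticipate is step~(iii): verifying that the connecting maps $\delta_{A\otimes\eta}$ and $\delta_{\Om(\ve\otimes\Bar B)}$ genuinely agree after transport across the chain of weak equivalences, since these maps are defined via the twisting-structure data ($\zeta$, $\xi$, the cotensor and tensor constructions in the biprincipal bundle formalism) and tracking them through the identifications $\Om(\Bar A\otimes\Bar B)\simeq A\otimes B$ requires a careful compatibility check between the hypothesized natural transformations and the twisting structure's structure maps. A secondary subtlety is keeping track of the symmetry isomorphism of $\otimes$ (so that $A\otimes\eta$ versus $\eta\otimes B$ genuinely match), which is harmless but must be stated; here one uses that $\cat M$ is assumed monoidal and the transformations in the hypothesis are natural, hence compatible with the braiding to the extent needed.
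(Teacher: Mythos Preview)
Your step (i) misreads Lemma~\ref{lem:amusing}(2). That lemma does \emph{not} give an equivalence of extended bundles between $\theta(g)$ and $\tau(\Om g)$: for $g:C'\to C$ it only compares the biprincipal bundle $(\Om C\to C\hker C'\to C')$ with $(\Om C\to \Om C\hquot\Om C'\to\Bar\Om C')$, i.e., the \emph{last} three terms of $\theta(g)$ with the \emph{first} three of $\tau(\Om g)$. As four-term extended bundles, $\theta(g)$ has leftmost monoid $\Om C'$ and rightmost comonoid $C'$, while $\tau(\Om g)$ has leftmost monoid $\Om C$ and rightmost comonoid $\Bar\Om C$; these do not match, so your reduction to comparing $\tau(A\otimes\eta)$ with $\tau\big(\Om(\ve\otimes\Bar B)\big)$ does not go through. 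Relatedly, your identification of $\Om(\ve\otimes\Bar B)$ with $\eta\otimes B:B\to A\otimes B$ has the arrow backwards: since $\ve\otimes\Bar B:\Bar A\otimes\Bar B\to\Bar B$, applying $\Om$ and the natural equivalences yields a map weakly equivalent to $\ve\otimes B:A\otimes B\to B$.

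The paper bypasses Lemma~\ref{lem:amusing} entirely and compares $\tau(f)$ with $\theta(g)$ directly, using exactly the simplification you already noticed. In the conormal case (which the paper treats, leaving the normal case to strict duality) one has
\[
D\hker(C\otimes D)=(C\otimes D)\square_{D}\P D\cong C\otimes\P D,\qquad (\Om C\otimes\Om D)\hquot\Om C=\U\Om C\otimes_{\Om C}(\Om C\otimes\Om D)\cong\U\Om C\otimes\Om D,
\]
and then writes down a two-step zigzag of elementary equivalences of extended bundles
\[
\theta(\ve\otimes D)\;\xrightarrow{\sim}\;\big(\Om C\otimes\Om D\to\U\Om C\otimes\Om D\to C\otimes\P D\to C\otimes D\big)\;\xrightarrow{\sim}\;\tau(\Om C\otimes\eta),
\]
using the hypothesized $\Om(C\otimes D)\xrightarrow{\sim}\Om C\otimes\Om D$, the acyclicity of $\U\Om C$ and $\P D$ (axioms (3) and (4) of Definition~\ref{defn:thc}), and the unit $u_{C\otimes D}$. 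The squares commute by inspection; the obstacle you anticipate in step~(iii) never arises, because no $\delta$-map has to be transported through $\Om\Bar$. Your computation $(A\otimes B)\hquot A\cong\U A\otimes B$ is the right ingredient; just drop the detour through Lemma~\ref{lem:amusing} and build the comparison with $\theta(\ve\otimes\Bar B)$ directly.
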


\begin{rmk}  It is well known that the necessary hypotheses on $\Om$ and $\Bar$ hold when $\cat M$ is either $\cat {sSet}$ or $\cat {dgProj}_{\Bbbk}$.
\end{rmk}

\begin{proof}   We prove the h-conormality of $\ve\otimes D$ and leave the other half of the proof, which is strictly dual, to the reader.

Observe that
\[
D\hker (C\otimes D) = (C\otimes D) \square _{D}\P D\cong C\otimes \P D
\]
and
\[
(\Om C\otimes \Om D)\hquot \Om C =\U \Om C \otimes _{\Om C} (\Om C \otimes \Om D)\cong \U \Om C \otimes \Om D
\]
as left $C$-comodules and right $\Om D$-modules.
There is therefore an equivalence of extended bundles
\[
\xymatrix{\Om (C\otimes D)\ar [r]^{}_{\sim}\ar [d]_{\Om (\ve\otimes D)}&\Om C\otimes \Om D\ar [r]_= \ar [d]^{}& \Om C\otimes \Om D\ar [d]^{\pi_{\Om C\otimes \eta}}\\
\Om D\ar [d]_{\del_{\ve\otimes D}}\ar [r]^{}_(0.4){\sim}&\U \Om C\otimes \Om D\ar[r]_(0.45)\cong\ar [d]_{}&\Om (C\otimes D)\hquot \Om C \ar [d]^{\delta _{\Om C\otimes \eta}}\\
D\hker (C\otimes D)\ar [d]_{\iota_{\ve\otimes D}}\ar [r]_{\cong}&C\otimes \P D \ar [r]_{\sim}^{}\ar[d]_{}&\Bar \Om C \ar [d]^{\Bar (\Om C\otimes \eta)}\\
C\otimes D\ar [r]^=&C\otimes D\ar [r]^{v_{C\otimes D}}_{\sim}&\Bar \Om (C\otimes D),}
\]
whence $\ve\otimes D$ is h-conormal with conormality structure $\Om C\otimes \eta$.
\end{proof}

\subsubsection{The simplicial case}

The work described in this article originated with following example.

\begin{ex} \label{ex:farjoun-segev} Let  $n:N\to G$ be a group homomorphism, seen as a morphism of constant simplicial groups. It was  proved in \cite{farjoun-segev} that, if there is a crossed module structure (cf. section 2.5 of \cite{farjoun-segev}) on the homomorphism $n$, then
$n$ is h-normal, in the sense defined here.  In particular, if $n$ is the inclusion of a normal subgroup, then it is h-normal.

In fact, the authors of \cite{farjoun-segev} proved an equivalence between the existence of a crossed module structure structure on $n$ and the existence of what they called a \emph{normal} simplicial group structure on the simplicial bar model of $G\hquot N$, in which the simplicial set is a free $G$-set in each level.
\end{ex}

\begin{ex}\label{ex:simpl} Let $e$ denote the trivial simplicial group, and let $G$ be any simplicial group.  The unique simplicial homomorphism $G\to e$ is h-normal if and only if  there is a simplicial set $X$ such  that $\overline\W G \sim \G X$ as simplicial sets. For example, if there is some $1$-reduced simplicial set $Y$ such that $G=\G^2Y$, then $G\to e$ is h-normal. Since double loop spaces are in some sense the generic homotopy commutative monoids, we see that h-normality of $G\to e$ is strongly related to homotopy commutativity of $G$.\end{ex}

More generally, homotopy fibers of simplicial maps give rise to h-normal maps of simplicial groups, as we show below.  The present example  is a reformulation of a well-known theorem about the classical dual Nomura-Puppe sequence of a continuous map, which uses the $A_{\infty}$-structure of based loop spaces. Our  proof here is quite simple and requires neither a full model category structure, nor higher homotopies: the twisting structure suffices.

\begin{prop}\label{prop:looping-conormal}  Let $g: X\to Y$ be a simplicial map, where $X$ and $Y$ are reduced.  If $\iota _{g}: Y\hker X\to X$ is the natural map from the homotopy fiber of $g$ to $X$, then the induced map of simplicial groups
\[
\G \iota _{g}: \G (Y\hker X)\to \G X
\]
is h-normal.
\end{prop}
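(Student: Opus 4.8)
The plan is to exhibit $\G\iota_g$ as h-normal by producing an explicit normality structure, namely the delooping $\overline\W\G g$ of (a model for) $g$, and then verifying that the hypotheses of Lemma \ref{lem:rigidconormal}, suitably translated through the $(\G,\overline\W)$-adjunction, are met. More precisely, I would first apply part (2) of Lemma \ref{lem:amusing} to the comonoid morphism $g:X\to Y$ (in the twisted homotopical category of reduced simplicial sets), which gives a weak equivalence of biprincipal bundles identifying the truncated dual Nomura-Puppe sequence $\theta(g)=\big(\G Y\hquot$-type data$\big)$ of $g$ — whose relevant terms are $\G X \to Y\hker X \to X$ — with the Nomura-Puppe-style data built from $\G g:\G X\to \G Y$. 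This is exactly the statement that the homotopy fiber $Y\hker X$, together with the map $\iota_g:Y\hker X\to X$, is recovered from looping: $\G(Y\hker X)$ sits in an extended bundle equivalent to $\tau(\G g)$ shifted appropriately.

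Concretely, I would set up the comparison so that it reduces to Lemma \ref{lem:rigidconormal} applied to $g:X\to Y$: the point is that $Y\hker X$, the homotopy fiber (Borel kernel) of $g$, \emph{does} carry more structure in the simplicial case — it is a comodule over $X$ and, crucially, one can check that $\iota_g:Y\hker X\to X$ is compatible enough with the reduced-simplicial-set structure that the diagram in Lemma \ref{lem:rigidconormal} commutes. The required weak equivalence of $\Om X = \G X$-modules, $\tilde\iota:\G Y \xrightarrow{\sim} X\hker(Y\hker X)$, should come from iterating the homotopy-fiber construction: the homotopy fiber of $\iota_g:Y\hker X\to X$ is weakly equivalent to $\G Y$ (the ``loops on the base'' appearing one step further along the fibration sequence $\G Y \to Y\hker X \to X \to Y$), and the acyclicity conditions (3)/(4) and the Borel-invariance of Remark \ref{rmk:Borel-inv} guarantee this equivalence respects the module structures. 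Granting the hypotheses of Lemma \ref{lem:rigidconormal}, that lemma immediately yields that $g$ is h-conormal with conormality structure $\Om\iota_g=\G\iota_g:\G(Y\hker X)\to \G X$; but by Remark \ref{rmks:norm}(2) h-normality and h-conormality are strictly dual, and unwinding the definitions, saying that $g$ has conormality structure $\G\iota_g$ is precisely the assertion that $\G\iota_g$ is h-normal (with normality structure $g$, or rather $\overline\W\G g$ under the adjunction).

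The main obstacle I expect is the verification of the two hypotheses of Lemma \ref{lem:rigidconormal} in the simplicial setting — that is, checking (i) that $Y\hker X = \overline\W\G X \times_{\nu} (\text{fiber})$-type model genuinely admits a comonoid (i.e., here just a simplicial-set, since every simplicial set is a comonoid via the diagonal) structure making $\iota_g$ a comonoid morphism, which in the simplicial case is essentially automatic but must be tracked through the twisted-cartesian-product description, and (ii) the commutativity of the pentagon-type diagram relating $\del_{\iota_g}$ and $\del_g$ via $\tilde\iota$. The latter is the genuinely technical point: it amounts to showing that the two ways of extracting ``$\G Y$'' — once as the fiber of $\iota_g$, once as $\Om$ of the base $Y$ — agree coherently with the connecting maps in the (dual) Nomura-Puppe sequences. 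I would handle this by working with explicit $G$-construction and $\overline\W$-construction models (following \cite{may}), using that $Y\hker X$ is literally the twisted cartesian product $\overline\W\G X \times_{\tau} X$ pulled back along $g$, and then invoking the long exact homotopy sequence of a twisted cartesian product (exactly as in the proof of conditions (5) and (6) in section \ref{sec:ex-twist-htpic}) to see that all the comparison maps in sight are weak equivalences. Once those two hypotheses are in hand, Lemma \ref{lem:rigidconormal} does the rest mechanically.
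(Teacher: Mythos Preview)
Your proposal is correct and follows essentially the same route as the paper: the key technical input you isolate---the weak equivalence $\tilde\iota:\G Y\xrightarrow\sim X\hker(Y\hker X)$ making the square in Lemma \ref{lem:rigidconormal} commute---is exactly what the paper separates out as Lemma \ref{lem:simpl-good}, after which the paper either unpacks the resulting zigzag of extended bundles explicitly (for Proposition \ref{prop:looping-conormal}) or, as you do, invokes Lemma \ref{lem:rigidconormal} directly (for Proposition \ref{prop:allsimpl}). One small sharpening: the paper proves Lemma \ref{lem:simpl-good} not via the long exact homotopy sequence but by writing down an explicit shearing isomorphism $(x,v,w)\mapsto (x,w,\G(g)(w)^{-1}\cdot v)$ identifying $X\hker(Y\hker X)$ with $(X\times_{\tau_X}\G X)\times \G Y$, which is what gives the strict commutativity of the comparison diagram rather than just a weak equivalence of fibers.
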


The main step in proving this proposition is to show that the hypothesis of Lemma \ref{lem:rigidconormal} holds for all simplicial morphisms.  Since we apply this result elsewhere in this article as well, we state it as a separate lemma.

\begin{lem}\label{lem:simpl-good} If $g:X\to Y$ is any simplicial morphism, where $X$ and $Y$ are reduced, then there is a weak equivalence of $\G X$-modules $\tilde\iota:\G Y\to X\hker (Y\hker X)$ such that
\[
\xymatrix{\G X \ar[d]_{=}\ar [r]^{\G g}&\G Y\ar [d]_{\sim}^{\tilde \iota}\ar[r]^{\del_{g}}& Y\hker X\ar [d]_{=}\\
\G X\ar[r]^(0.4){\del_{\iota_{g}}}&X\hker(Y\hker X)\ar[r]^(0.6){\iota_{\iota_{g}}}&Y\hker X}
\]
commutes.
\end{lem}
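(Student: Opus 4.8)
The plan is to translate the statement into the explicit twisted Cartesian product models furnished by the twisting structure on reduced simplicial sets based on the $(\G,\ow)$-adjunction, where it becomes the familiar fact that the homotopy fibre of $\operatorname{hfib}(g)\to X$ is $\Om Y$. First I would record the models. For a reduced simplicial set $Z$ the classifying coprincipal bundle is $\xi(Z)=\big(\G Z\xrightarrow{i_{Z}}\P Z\xrightarrow{p_{Z}}Z\big)$, with $\P Z=Z\times_{\tau_{Z}}\G Z$ the acyclic twisted Cartesian product recalled in Section~\ref{sec:ex-twist-htpic}, $i_{Z}$ the inclusion of the fibre over the base point, and $p_{Z}$ the projection. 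Hence $Y\hker X=g^{*}\!\big(\xi(Y)\big)$ has total object the pulled-back twisted Cartesian product $X\times_{\tau_{Y}g}\G Y$, with $\iota_{g}$ the projection to $X$ and $\del_{g}=g^{*}(i_{Y})$ the fibre inclusion over the base point of $X$; iterating, $X\hker(Y\hker X)=\iota_{g}^{*}\!\big(\xi(X)\big)$ has total object $\P X\times_{X}(Y\hker X)$, with $\iota_{\iota_{g}}$ the projection to $Y\hker X$ and $\del_{\iota_{g}}$ the corresponding fibre inclusion.

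The key observation is then that $X\hker(Y\hker X)=\P X\times_{X}(Y\hker X)$ is the pullback along $p_{X}\colon\P X\to X$ of the twisted Cartesian product $\iota_{g}\colon Y\hker X\to X$ with group fibre $\G Y$, hence is itself a twisted Cartesian product with fibre $\G Y$ over the \emph{contractible} base $\P X$. I would therefore define $\tilde\iota\colon\G Y\to X\hker(Y\hker X)$ to be (a suitable normalisation of) the inclusion of the fibre over the base point of $\P X$. That $\tilde\iota$ is a weak equivalence follows from the long exact homotopy sequence of a twisted Cartesian product -- the tool already used in Section~\ref{sec:ex-twist-htpic} to verify conditions (5) and (6) of Definition~\ref{defn:thc} in the simplicial case -- since $\P X$ is contractible; alternatively one may filter $X\hker(Y\hker X)$ by the skeleta of $\P X$ and apply Zeeman's comparison theorem, just as in the chain-complex case. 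The right $\G X$-module structures on source and target are then to be compared through the base point of $\P X$.

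The two squares are handled as follows. The right-hand square $\iota_{\iota_{g}}\circ\tilde\iota=\del_{g}$ is essentially forced by the construction, since projecting $\P X\times_{X}(Y\hker X)$ onto its $Y\hker X$-factor sends the fibre $\G Y$ over the base point of $\P X$ isomorphically onto $\del_{g}(\G Y)$. The left-hand square $\tilde\iota\circ\G g=\del_{\iota_{g}}$ is the assertion that, transported along $\tilde\iota$, the connecting map $\del_{\iota_{g}}$ of the dual Nomura--Puppe sequence of $\iota_{g}$ becomes $\G g$ -- the simplicial avatar of the rotation identity for Puppe sequences -- and is to be verified by a direct computation with the face and degeneracy operators of the twisted Cartesian products involved.

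The step I expect to be the main obstacle is making the single map $\tilde\iota$ simultaneously a morphism of right $\G X$-modules and strictly compatible with both connecting maps: the rotation isomorphism for Puppe sequences carries an intrinsic loop inversion, so the literal fibre inclusion may need a correction built from the group multiplication of $\G Y$ and from $\G g$, and one must check that such a correction remains equivariant. Pinning down the precise normalisation of the connecting maps within the twisting structure is thus the heart of the argument; the remaining verifications are routine unravellings of the twisted Cartesian product definitions.
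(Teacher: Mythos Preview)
Your approach is essentially the paper's: both pass to the explicit twisted-Cartesian-product model $(X\times_{\tau_Y g}\G Y)\times_{\tau_X\iota_g}\G X$ and take $\tilde\iota$ to be the inclusion of the fibre $\G Y$ over the basepoint of $\P X$. The only difference is that where you anticipate a correction involving $\G g$ and propose to find it by direct computation, the paper writes it down at once as the simplicial isomorphism $(x,v,w)\mapsto\bigl(x,w,\G(g)(w)^{-1}\cdot v\bigr)$ onto the \emph{untwisted} product $(X\times_{\tau_X}\G X)\times\G Y$, from which the weak-equivalence claim and the commutativity of both squares are read off; your long-exact-sequence argument for the weak equivalence is an equally valid alternative.
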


\begin{proof} Observe that
\[
X\hker (Y\hker X) = (Y\hker X) \times_{\tau_{X}\iota _{g}} \G X =(X\times_{\tau_{Y}g} \G Y)\times_{\tau_{X}\iota _{g}} \G X.
\]
An easy calculation shows that, since $\iota_{g}:X\times_ {\tau_{Y}g}\G Y \to X$ is simply projection onto $X$, the map
\[
(X\times_{\tau_{Y}g}\G Y)\times_{\tau_{X}\iota _{g}} \G X\to  (X\times_ {\tau_{X}} \G X)\times \G Y: (x,v,w)\mapsto (x, w, \G(g) (w)^{-1}\cdot v)
\]
is a simplicial isomorphism.
Moreover, the inclusion of $ \G Y$ into $(X\times_{\tau_{X}}\G X)\times \G Y$ is  a weak equivalence, since $X\times_{\tau_{X}} \G X$ is contractible.  We can therefore set $\tilde \iota$ equal to the composite
\[
\G Y\overset \sim\hookrightarrow(X\times_ {\tau_{X}}\times \G X)\times \G Y \cong    X\hker(Y\hker X).
\]
\end{proof}

\begin{proof}[Proof of Proposition \ref{prop:looping-conormal}]  There is an equivalence of extended bundles
\[
\xymatrix{\G\ow \G X \ar [d]_{\G\ow \G g}\ar[r]^{v_{\G X}}_{\sim}& \G X\ar [d]_{\G g}& \G X\ar [l]_{=}\ar [d]_{\G g}\ar[r]^=&\G X \ar[d] _{\del_{\iota_{g}}}\ar [r]^=& \G X\ar [d]_{\pi_{\G \iota _{g}}}\\
\G\ow \G Y\ar [d]_{\del_{\ow \G g}}\ar[r]^{v_{\G Y}}_{\sim}& \G Y\ar[d]_{\pi_{\G g}}& \G Y\ar [l]_{=}\ar [d]_{\del_{g}}\ar [r]^(0.35){\tilde \iota}_(0.35){\sim}&X\hker (Y\hker X)\ar [d]_{\iota_{\iota_{g}}}\ar [r]_(0.45){\sim}& \G X\hquot \G (Y\hker X)\ar [d]_{\delta_{\G \iota _{g}}}\\
\ow \G Y\hker \ow \G X\ar [d]_{\iota _{\ow\G g}}\ar [r]_{\sim}& \G Y\hquot \G X\ar [d]_{\delta _{\G g}}&Y\hker X \ar [l]^\sim \ar[d]_{\iota_{g}}\ar [r]^=&Y\hker X\ar [d]_{\iota _{g}}\ar[r]^(0.4){u_{Y\hker X}}_(0.4){\sim}&\ow \G (Y\hker X)\ar [d]_{\ow \G \iota _{g}}\\
\ow \G X \ar [r]^=&\ow \G X&X\ar[l]_{u_{X}}^\sim\ar [r]^=&X\ar [r]^(0.4){u_{X}}_(0.4){\sim}&\ow \G X,}
\]
where the elementary equivalence of the first column follows from Lemma \ref{lem:amusing}(1) applied to $\G g$, that of the second column from Lemma \ref{lem:amusing}(2) applied to $g$, that of the third column from Lemma \ref{lem:simpl-good} and that of the fourth column from Lemma \ref{lem:amusing}(2) applied to $\iota_{g}$.
We can therefore conclude that $\G\iota_{g}$ is h-normal, with associated normality structure $\overline\W\G g: \overline\W\G X\to \overline\W\G Y$.
\end{proof}

Not only do we obtain an h-normal morphism upon looping homotopy fiber inclusions, but every h-normal morphism of reduced simplicial groups is weakly equivalent to a morphism of this type.  This is analogous to the fact that the normal subgroups of a fixed group $G$ are exactly the kernels of surjective homomorphisms with domain $G$.

\begin{prop}\label{prop:char-h-normal} If $f:G\to G'$ is an h-normal morphism of simplicial groups, then there is a simplicial map $g:X\to Y$ such that $f$ is weakly equivalent to $\G \iota_{g}:\G(Y\hker X)\to \G X$.
\end{prop}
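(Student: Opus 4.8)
The plan is to recover $f$, up to weak equivalence, from the right-hand end of its truncated Nomura-Puppe sequence. Since $f$ is h-normal, Definition \ref{defn:normal} supplies a morphism $g\colon X\to Y$ in $\cat{comon}$ --- i.e.\ a morphism of reduced simplicial sets, which will serve as the map asserted in the statement --- together with an equivalence of extended bundles (a zigzag of elementary equivalences) between the truncated Nomura-Puppe sequence
\[
\tau(f)=\bigl(G'\xrightarrow{\pi_f}G'\hquot G\xrightarrow{\delta_f}\ow G\xrightarrow{\ow f}\ow G'\bigr)
\]
and the truncated dual Nomura-Puppe sequence
\[
\theta(g)=\bigl(\G X\xrightarrow{\G g}\G Y\xrightarrow{\del_g}Y\hker X\xrightarrow{\iota_g}X\bigr),
\]
in which $\iota_g\colon Y\hker X\to X$ is the inclusion of the Borel (hence homotopy) kernel of $g$ into $X$, exactly as in Proposition \ref{prop:looping-conormal}.

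The main step is to read off the rightmost maps of the extended bundles along this zigzag. A morphism of extended bundles contains, in its rightmost square, a commuting square between the two ``$N\to C$'' maps, whose two vertical edges are among the components of the morphism; in an elementary equivalence all four components, and in particular these two edges, are weak equivalences. Since the rightmost map of $\tau(f)$ is $\ow f\colon\ow G\to\ow G'$ and the rightmost map of $\theta(g)$ is $\iota_g\colon Y\hker X\to X$, transporting this observation along the whole zigzag produces a chain of commuting squares with vertical weak equivalences linking $\ow f$ to $\iota_g$. Thus $\ow f$ and $\iota_g$ are weakly equivalent as morphisms of $\cat M$.

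Then it remains only to loop. The functor $\G$ is homotopical by condition (2) of Definition \ref{defn:thc}, so applying it to the chain of squares just obtained shows that $\G\ow f$ is weakly equivalent, as a morphism of simplicial groups, to $\G\iota_g\colon\G(Y\hker X)\to\G X$. On the other hand, the counit $v\colon\G\ow(-)\Rightarrow\mathrm{id}$ of the $(\G,\ow)$-adjunction is a natural weak equivalence by condition (1) of Definition \ref{defn:thc}, so its naturality square at $f$ is a weak equivalence of morphisms from $\G\ow f$ to $f$. Concatenating these zigzags yields that $f$ is weakly equivalent to $\G\iota_g$, which is the claim.

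I do not anticipate a serious obstacle: the only point needing care is the bookkeeping in the middle step --- checking that the definition of elementary equivalence of extended bundles genuinely permits extraction of a weak equivalence between the rightmost maps alone, and that weak equivalence of morphisms is a transitive relation preserved both by homotopical functors and by natural weak equivalences. All of this is routine. It is worth observing that the argument in fact shows a little more: the normality structure $g$ of $f$ can itself be taken as the required simplicial map, and then $X\simeq\ow G'$ while $Y\hker X\simeq\ow G$; combined with Proposition \ref{prop:looping-conormal}, this establishes the full equivalence recorded in Theorem~A(1).
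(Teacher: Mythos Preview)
Your proposal is correct and follows essentially the same route as the paper's proof: extract the rightmost square of each elementary equivalence in the zigzag between $\tau(f)$ and $\theta(g)$ to obtain a weak equivalence between $\ow f$ and $\iota_g$, apply the homotopical functor $\G$, and then use the counit $v:\G\ow\Rightarrow\mathrm{id}$ to pass from $\G\ow f$ back to $f$. The paper's proof is terser (it simply says ``applying $\G$ to the lower two rows of the diagram'') and writes the normality structure as $g:\ow G'\to Y$, implicitly identifying the source with $\ow G'$, whereas you are slightly more careful in treating $g:X\to Y$ generically and only deducing $X\simeq\ow G'$ at the end; but the argument is the same.
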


\begin{proof}  Let $g:\overline \W G'\to Y$ be the normality structure associated to $f$, i.e., there is a commuting diagram of simplicial morphisms
\[
\xymatrix{\G X\ar [d]^{\G g}&\bullet \ar[d]\ar [l]^\sim \ar [r]_{\sim}&\cdots&\bullet \ar[d]\ar [l]^\sim\ar [r]_{\sim}&G'\ar [d]_{\pi_{f}} \\
\G Y\ar [d]^{\del_{g}}&\bullet\ar[d] \ar [l]^\sim \ar [r]_{\sim}&\cdots&\bullet \ar[d]\ar [l]^\sim\ar [r]_{\sim}&G'\hquot G\ar [d]_{\delta_{f}}\\
Y\hker X\ar [d]^{\iota _{g}}&\bullet\ar[d]\ar [l]^\sim\ar [r]_{\sim}&\cdots&\bullet\ar[d] \ar [l]^\sim\ar [r]_{\sim}&\ow G\ar [d]_{\ow f}\\
X&\bullet \ar [l]^\sim \ar [r]_{\sim}&\cdots&\bullet \ar [l]^\sim\ar [r]_{\sim}&\ow G'.}
\]
Applying $\G$ to the lower two rows of the diagram, we see that $\G \iota _{g}$ is equivalent to $\G \ow f$, which is in turn equivalent to $f$.
\end{proof}

H-conormality is banal in the simplicial context.

\begin{prop}\label{prop:allsimpl}  All simplicial maps are h-conormal.
\end{prop}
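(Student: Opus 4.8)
The plan is to deduce this entirely from Proposition \ref{prop:looping-conormal} together with the homotopy invariance of h-conormality. Recall that in the simplicial twisted homotopical category the comonoids are precisely the reduced simplicial sets, so the statement amounts to showing that every morphism $g\colon X\to Y$ of reduced simplicial sets admits a conormality structure, and the natural candidate — already singled out in Proposition \ref{prop:looping-conormal} — is $\G\iota_g\colon\G(Y\hker X)\to\G X$.

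First I would unwind what Proposition \ref{prop:looping-conormal} actually delivers: its proof shows that $\G\iota_g$ is h-normal with associated normality structure $\ow\G g\colon\ow\G X\to\ow\G Y$, and by Definition \ref{defn:normal} this is precisely the assertion that $(\G\iota_g,\ow\G g)$ is a normal pair, equivalently that $\ow\G g$ is h-conormal with conormality structure $\G\iota_g$, i.e.\ $\tau(\G\iota_g)$ and $\theta(\ow\G g)$ are equivalent as extended bundles. So Proposition \ref{prop:looping-conormal} already proves h-conormality for every morphism of the special form $\ow\G g$. It then remains only to transport this along the unit of the $(\G,\ow)$-adjunction. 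Naturality of the unit $u\colon\mathrm{Id}\Rightarrow\ow\G$ gives a commuting square relating $g$, $\ow\G g$, and the unit maps $u_X\colon X\xrightarrow{\sim}\ow\G X$, $u_Y\colon Y\xrightarrow{\sim}\ow\G Y$, which are weak equivalences by axiom (1) of Definition \ref{defn:thc}. Applying the truncated dual Nomura--Puppe functor $\theta(-)$ to this square produces a morphism of extended bundles $\theta(g)\to\theta(\ow\G g)$ each of whose four components is a weak equivalence: the $\Om=\G$ components are weak equivalences because $\G$ is homotopical, the comodule (Borel-kernel) component is a weak equivalence by Remark \ref{rmk:Borel-inv}, and the remaining component is $u_X$. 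Thus $\theta(g)\xrightarrow{\sim}\theta(\ow\G g)$ is an elementary equivalence; concatenating it with the equivalence $\theta(\ow\G g)\simeq\tau(\G\iota_g)$ from the previous step shows that $(\G\iota_g,g)$ is a normal pair, so $g$ is h-conormal with conormality structure $\G\iota_g$. (Equivalently, and more briefly: $g$ is weakly equivalent to the h-conormal morphism $\ow\G g$, hence h-conormal by Remarks \ref{rmks:norm}(3).)

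There is essentially no obstacle here beyond bookkeeping — all of the real content sits in Proposition \ref{prop:looping-conormal} and Lemma \ref{lem:simpl-good}. The one point requiring a line of care is verifying that $\theta(-)$ sends the unit-naturality square, whose vertical legs are weak equivalences, to an \emph{elementary} equivalence of extended bundles, and this is exactly what axioms (1)--(2) of Definition \ref{defn:thc} and the invariance statement of Remark \ref{rmk:Borel-inv} are designed to supply.
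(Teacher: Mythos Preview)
Your argument is correct, but it differs from the paper's route. The paper proves the proposition in one line by invoking Lemmas \ref{lem:rigidconormal} and \ref{lem:simpl-good} directly: since every simplicial set is a comonoid via the diagonal and every map is automatically a comonoid map, $Y\hker X$ inherits a comonoid structure making $\iota_g$ a comonoid morphism, and Lemma \ref{lem:simpl-good} supplies the weak equivalence $\tilde\iota$ required by the hypothesis of Lemma \ref{lem:rigidconormal}; that lemma then immediately yields that $g$ is h-conormal with conormality structure $\G\iota_g$.

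Your argument instead bootstraps from Proposition \ref{prop:looping-conormal}: you read that proposition as producing the normal pair $(\G\iota_g,\ow\G g)$, hence h-conormality of $\ow\G g$, and then transport along the unit square $g\Rightarrow\ow\G g$ using the homotopical invariance of $\theta(-)$. This is valid and lands on the same conormality structure $\G\iota_g$, but it is a detour: Proposition \ref{prop:looping-conormal} itself already consumes Lemma \ref{lem:simpl-good} (and Lemma \ref{lem:amusing}) in its proof, so you are invoking strictly more machinery and adding an extra transport step to reach the same conclusion. The paper's argument is shorter because Lemma \ref{lem:rigidconormal} is precisely the abstract criterion tailored to avoid passing through $\ow\G g$. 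Your approach does have the mild expository advantage of making Proposition \ref{prop:allsimpl} a visible corollary of Proposition \ref{prop:looping-conormal}, emphasizing the duality in Definition \ref{defn:normal}.
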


\begin{proof} Let $g:X\to Y$ be any simplicial map.  Lemmas \ref{lem:rigidconormal} and  \ref{lem:simpl-good}  together imply that $g$ is h-conormal.
\end{proof}

\begin{rmk}  Our analysis of h-normality and h-conormality in the simplicial setting is highly dependent on the fact that the monoidal product of simplicial sets is exactly the categorical product, so that every morphism of simplicial sets can be viewed as a morphism of comonoids.  In particular, every simplicial homomorphism of simplicial groups underlies a morphism of bimonoids.

We expect that results similar to Propositions \ref{prop:looping-conormal} and \ref{prop:char-h-normal} hold in a more general monoidal category as well, if we restrict to morphisms of bimonoids.  In the next section, we study h-normality in a monoidal category where the monoidal product is not the categorical product.
\end{rmk}

\subsubsection{The chain complex case}

As before, let $\cat {Alg}_{\Bbbk}$ denote the category of connected, augmented chain algebras, and let $\cat {Coalg}_{\Bbbk}$ denote the category of $1$-con\-nec\-ted, coaugmented chain coalgebras over a commutative ring $\Bbbk$, whose underlying chain complexes are degreewise projective and finitely generated.
We begin by considering the most ``extreme'' possible cases of h-normal and h-conormal morphisms in this framework.

\begin{ex}\label{ex:chcx} \begin{enumerate}
\item Let $A\in \ob \cat {Alg}_{\Bbbk}$.  Its unit map $\eta:\Bbbk \to A$ is always h-normal, with associated normality structure equal to the coaugmentation  $\Bbbk\to \Bar A$.  The identity map $A\xrightarrow = A$ is also h-normal, with associated normality structure equal to the counit $\Bar A \to \Bbbk$.

On the other hand, the augmentation $A\to \Bbbk$ is h-normal if and only if there is a coalgebra $C$ such that $\Om C \sim \Bar A$, as chain complexes.  In particular, for any $1$-reduced simplicial set $X$, the augmentation map $C_{*}\G^2X \to \Bbbk$ is h-normal, since  $\Om C_{*}X\sim C_{*}\G X\sim \Bar C_{*}\G ^2 X$ (cf. e.g., \cite{hpst}).

\item Let $C\in \cat {Coalg}_{\Bbbk}$. Arguments dual to those above show that its counit and identity map are h-conormal, while the coaugmentation map is  h-conormal if there is an algebra $A$ such that $\Bar A \sim \Om C$, as chain complexes.  In particular, for any $1$-reduced simplicial set $X$, the coaugmentation map $\Bbbk\to C_{*}X$ is h-conormal, since $\Bar C_{*}\G ^2 X\sim \Om C_{*}(X)$, as above.
\end{enumerate}
\end{ex}

We next consider a purely algebraic class of examples, which can be thought of as the homotopical, chain analogue of the fact that every subgroup of an abelian group is normal.

\begin{prop}\label{prop:abelian}  If $A$ and $A'$ are connected, commutative chain algebras, then any algebra morphism $f: A\to A'$ is  h-normal.  Dually, if $C$ and $C'$ are $1$-connected, cocommutative chain coalgebras, then any coalgebra morphism $g:C'\to C$ is h-conormal.
\end{prop}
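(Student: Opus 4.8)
The plan is to exploit the general criterion of Lemma \ref{lem:rigidnormal} (and its dual, Lemma \ref{lem:rigidconormal}) together with the fact that, in the commutative setting, the relevant homotopy quotients and homotopy kernels inherit (co)multiplicative structure "for free". Concretely, let $f:A\to A'$ be a morphism of connected commutative chain algebras. The Borel quotient $A'\hquot A = \U A\otimes_{A} A'$ can be identified, up to the acyclic bar/cobar machinery, with the two-sided bar construction $\Bar(A',A,\Bbbk)$, and when $A$ and $A'$ are commutative this is again a commutative chain algebra (the product being induced by the shuffle product on bar constructions), with respect to which $\pi_{f}:A'\to A'\hquot A$ is an algebra map. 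So the first task is to write down this monoid structure on $A'\hquot A$ carefully and verify that $\pi_{f}$ is multiplicative.

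The second, and I expect principal, step is to produce the weak equivalence of $\Bar A'$-comodules $\tilde\pi:(A'\hquot A)\hquot A' \xrightarrow\sim \Bar A$ required by Lemma \ref{lem:rigidnormal}, together with the commuting square relating it to $\delta_{f}$, $\pi_{\pi_{f}}$ and $\delta_{\pi_{f}}$. The idea is that iterating the Borel construction, $(A'\hquot A)\hquot A'$ is a model for $\Bar(\Bbbk, A', \Bar(A',A,\Bbbk))$, and there is a standard "extra degeneracy"/homotopy-equivalence collapse of the inner $\Bar(A',A',\Bbbk)$ onto $\Bbbk$, leaving $\Bar(\Bbbk,A,\Bbbk)\simeq \Bar A$. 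Making this collapse into an honest morphism of left $\Bar A'$-comodules, and checking the compatibility square, is where the real work lies; here commutativity of $A'$ is used to ensure all the maps in sight are comodule (indeed coalgebra) morphisms, and one may invoke Zeeman's comparison theorem (as in the chain-complex verification of Definition \ref{defn:thc} in section \ref{sec:ex-twist-htpic}) to upgrade the requisite quasi-isomorphism to a comodule weak equivalence. Granting the hypotheses of Lemma \ref{lem:rigidnormal}, that lemma immediately yields that $f$ is h-normal, with normality structure $\Bar\pi_{f}:\Bar A'\to\Bar(A'\hquot A)$.

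The dual statement — that any morphism $g:C'\to C$ of $1$-connected cocommutative chain coalgebras is h-conormal — follows by the strictly dual argument: cocommutativity makes the Borel kernel $C\hker C' = C\,\square_{C'}\,\P C'$ into a chain coalgebra with $\iota_{g}$ a coalgebra map, one constructs the $\Om C'$-module weak equivalence $\tilde\iota:\Om C\to C'\hker(C\hker C')$ by collapsing the iterated cobar construction via an extra codegeneracy, checks the compatibility square of Lemma \ref{lem:rigidconormal}, and concludes h-conormality with conormality structure $\Om\iota_{g}:\Om(C\hker C')\to\Om C'$. I would state this half as "strictly dual" and leave the verification to the reader, as is done elsewhere in the paper. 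The one point deserving a remark is that the connectivity and degreewise-finiteness hypotheses on objects of $\cat{Alg}_{\Bbbk}$ and $\cat{Coalg}_{\Bbbk}$ are exactly what is needed for the bar/cobar constructions to stay within the twistable category and for Zeeman's theorem to apply to the filtration by bar-length.
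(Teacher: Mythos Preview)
Your strategy is correct and essentially matches the paper's: both arguments use the shuffle product on $\Bar A$ (available because $\mu:A\otimes A\to A$ is an algebra map when $A$ is commutative) to equip $A'\hquot A=\Bar A\otimes_{ft_{\Bar}}A'$ with a chain algebra structure for which $\pi_f$ is multiplicative, and then invoke Lemma~\ref{lem:rigidnormal}.

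The one place where the paper is noticeably cleaner is your ``principal step.'' You propose to realise $(A'\hquot A)\hquot A'$ as an iterated two-sided bar construction and collapse an inner $\Bar(A',A',\Bbbk)$ via an extra degeneracy, possibly appealing to Zeeman's comparison theorem. None of this is needed. With the multiplication just defined, the left $A'$-action on $A'\hquot A$ through $\pi_f$ touches only the $A'$ tensor factor, so one has a direct isomorphism
\[
(A'\hquot A)\hquot A'=\U A'\otimes_{A'}(\U A\otimes_A A')\;\cong\;\U A'\otimes(\U A\otimes_A\Bbbk)\;\cong\;\U A'\otimes\Bar A,
\]
and the obvious projection onto $\Bar A$ is a quasi-isomorphism simply because $\U A'$ is acyclic. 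The required compatibility square is then immediate. This removes the need for any spectral-sequence or contracting-homotopy argument.

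One small slip: in the dual half you wrote $C\hker C'=C\,\square_{C'}\,\P C'$, but for $g:C'\to C$ the definition is $C\hker C'=C'\square_{C}\P C$. Otherwise the dualisation is exactly as you say.
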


\begin{proof}  It is a classical result that if $A$ is commutative, then $\Bar A$ admits a commutative multiplication
\[
\Bar A\otimes \Bar A \xrightarrow \nabla \Bar (A\otimes A) \xrightarrow {\Bar \mu} \Bar A,
\]
where $\nabla$ is the shuffle (Eilenberg-Zilber) equivalence, and $\mu$ is the multiplication map of $A$, which is an algebra map since it is commutative.  A simple calculation then shows that
\[
(\Bar A\ot_{ft_{\Bar}}A')\ot (\Bar A\ot_{ft_{\Bar}}A')\to \Bar A\ot_{ft_{\Bar}}A': (w\otimes a)\otimes (w'\otimes a')\mapsto w\cdot w'\otimes aa',
\]
where $\cdot$ denotes the multiplication on $\Bar A$ defined above, is a chain map, as well as associative and unital.  It is obvious that $\pi_{f}:A'\to A'\hquot A=\Bar A\ot_{ft_{\Bar}}A'$ is a morphism of chain algebras with respect to this multiplication. Moreover, the multiplicative structure on $A'\hquot A$ is such that
\[
(A'\hquot A)\hquot A'=\U A'\ot _{A'}(\U A \ot _{A} A')\cong \U A'\ot \U A\ot_{A}\Bbbk \cong \U A'\otimes \Bar A,\]
whence the obvious projection map $(A'\hquot A)\hquot A'\to \Bar A$ is a weak equivalence, since $\U A'$ is acyclic.  Lemma \ref{lem:rigidnormal} therefore implies that $f$ is h-normal.

Dualizing the proof in the algebra case, we obtain the desired result for chain coalgebras as well.
\end{proof}

For algebraic topologists, the most interesting source of examples of h-conormal and h-normal maps of chain complexes may be the following.

\begin{prop}\label{prop:ch-conormal-induced} If $g:X\to Y$ is a simplicial map, where $Y$ is $1$-reduced and both $X$ and $Y$ are of finite type, then $C_{*}g:C_{*}X\to C_{*}Y$ is an h-conormal map of chain coalgebras, with associated conormality structure $\Om C_{*}\iota _{g}:\Om C_{*}(Y\hker X)\to \Om C_{*}X$, i.e, $(\Om C_{*}\iota _{g}, C_{*}g)$ is a normal pair.
\end{prop}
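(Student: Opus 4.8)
The plan is to deduce the proposition from the proof of Proposition \ref{prop:looping-conormal} by pushing the equivalence of extended bundles that witnesses h-conormality of $g$ in $\cat{sSet}$ forward along the normalized chains functor $C_{*}$, and then reconciling the result with the chain-level classifying bundle constructions by means of the standard comparison quasi-isomorphisms between simplicial and chain models.

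First I would record that the proof of Proposition \ref{prop:looping-conormal} exhibits, in the right-hand columns of its main diagram, an explicit zigzag of elementary equivalences of extended bundles in $\cat{sSet}$ connecting $\theta(g)$ with $\tau(\G\iota_{g})$, built from the map $\tilde\iota$ of Lemma \ref{lem:simpl-good} and from Lemma \ref{lem:amusing}(2) applied to $\iota_{g}$.  The normalized chains functor $C_{*}$ is lax monoidal (via the Eilenberg--Zilber shuffle map), lax comonoidal (via the Alexander--Whitney map) and homotopical; since $X$, $Y$ and the homotopy fibre $Y\hker X$ are reduced and of finite type, $C_{*}$ carries every object and morphism occurring in that zigzag into $\cat{dgProj}_{\Bbbk}$, sending simplicial groups to chain algebras, reduced simplicial sets to chain coalgebras, (co)modules to (co)modules, and elementary equivalences of extended bundles to elementary equivalences of extended bundles.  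Applying $C_{*}$ therefore yields a zigzag of elementary equivalences of extended bundles $C_{*}\theta(g)\simeq C_{*}\tau(\G\iota_{g})$ in $\cat{dgProj}_{\Bbbk}$.

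Next I would identify the two ends of this chain, up to further zigzags of elementary equivalences, with $\theta(C_{*}g)$ and with $\tau(\Om C_{*}\iota_{g})$ respectively.  This is a term-by-term reconciliation resting on three natural comparison morphisms, each of which is a quasi-isomorphism precisely by virtue of the finiteness and connectivity hypotheses on $X$ and $Y$: (i) Szczarba's natural quasi-isomorphism of chain algebras $\alpha_{Z}\colon\Om C_{*}Z\to C_{*}\G Z$ for $1$-reduced $Z$ of finite type (see \cite{hpst} and Appendix \ref{sec:twisting}), whose naturality accounts for the first two terms of $\theta$ and of $\tau$; (ii) the dual ``bar'' comparison $\Bar C_{*}H\to C_{*}\ow H$ for a connected simplicial group $H$ of finite type, which, combined with $\Bar\alpha_{Z}$, identifies $\Bar\Om C_{*}Z$ with $C_{*}\ow\G Z$ and so disposes of the last term of $\tau(\Om C_{*}\iota_{g})$; and (iii) the twisted Eilenberg--Zilber theorem, which supplies natural quasi-isomorphisms, compatible with the pertinent module and comodule structures, between the chains on a twisted cartesian product and the corresponding twisted tensor product.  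Applying (iii) to the homotopy fibre $Y\hker X=X\times_{\tau_{Y}g}\G Y$ produces a zigzag $C_{*}(Y\hker X)\simeq C_{*}X\ot_{t}C_{*}\G Y\simeq C_{*}Y\hker C_{*}X$ compatible with the projections onto $C_{*}X$ and with the inclusions of the fibre, while applying it to $\G X\hquot\G(Y\hker X)$ gives $C_{*}\big(\G X\hquot\G(Y\hker X)\big)\simeq\Om C_{*}X\hquot\Om C_{*}(Y\hker X)$, using homotopy-invariance of the Borel quotient (Remark \ref{rmk:Borel-inv}) together with $\alpha_{X}$ and $\alpha_{Y\hker X}$.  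Assembling these comparisons columnwise yields zigzags of elementary equivalences $\theta(C_{*}g)\simeq C_{*}\theta(g)$ and $C_{*}\tau(\G\iota_{g})\simeq\tau(\Om C_{*}\iota_{g})$, so that concatenation gives
\[
\theta(C_{*}g)\;\simeq\;C_{*}\theta(g)\;\simeq\;C_{*}\tau(\G\iota_{g})\;\simeq\;\tau(\Om C_{*}\iota_{g}),
\]
an equivalence of extended bundles.  Hence $(\Om C_{*}\iota_{g},C_{*}g)$ is a normal pair, so $C_{*}g$ is h-conormal with the asserted conormality structure.

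The main obstacle is this second step.  One must verify not merely that the chains-versus-simplicial comparison maps (Szczarba's, the bar comparison, and the twisted Eilenberg--Zilber equivalence) are quasi-isomorphisms of the underlying chain complexes---which is exactly where the finiteness and connectivity hypotheses are consumed---but that they organize into morphisms of \emph{extended bundles}: that they are simultaneously compatible with the monoid, comonoid, module and comodule structures in play and intertwine the several twisting cochains with the canonical cobar and bar twisting cochains.  The requisite compatibility statements are those assembled in the appendix and in \cite{hpst}.
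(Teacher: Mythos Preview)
Your outline is sound and would work, but the paper organises the argument more economically.  The difference lies in the second half: instead of pushing the \emph{entire} simplicial zigzag $\theta(g)\simeq\tau(\G\iota_{g})$ from Proposition~\ref{prop:looping-conormal} through $C_{*}$ and then comparing $C_{*}\tau(\G\iota_{g})$ with $\tau(\Om C_{*}\iota_{g})$, the paper never passes through $C_{*}\tau(\G\iota_{g})$ at all.  It applies Theorem~\ref{thm:cohoch} (the Szczarba map extended to a morphism of mixed bundles, i.e.\ exactly the structured twisted Eilenberg--Zilber statement you invoke) \emph{twice}---once to $g$ and once to $\iota_{g}$---together with $C_{*}$ of Lemma~\ref{lem:simpl-good}, and then finishes with Lemma~\ref{lem:amusing}(2) applied to $C_{*}\iota_{g}$ \emph{in the chain category}.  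This yields a four-step zigzag
\[
\theta(C_{*}g)\;\xrightarrow{Sz_{g}}\;\bullet\;\xrightarrow{C_{*}\tilde\iota}\;\bullet\;\xleftarrow{Sz_{\iota_{g}}}\;\theta(C_{*}\iota_{g})\text{-shape}\;\xrightarrow{\text{Lem.~\ref{lem:amusing}(2)}}\;\tau(\Om C_{*}\iota_{g}).
\]
The payoff is that the paper never needs the bar comparison $\Bar C_{*}H\simeq C_{*}\ow H$ as a coalgebra map, nor a comparison between $C_{*}\big(\G X\hquot\G(Y\hker X)\big)$ and $\Om C_{*}X\hquot\Om C_{*}(Y\hker X)$; both of these are required by your route and, while true, demand separate structured arguments not supplied in the paper.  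Conversely, your route has the conceptual appeal of literally transporting the simplicial proof; the paper's route buys a shorter list of ingredients by exploiting that Lemma~\ref{lem:amusing} holds in \emph{any} twisted homotopical category, so one may invoke it directly on the chain side rather than on the simplicial side followed by $C_{*}$.

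Your diagnosis of the main obstacle is exactly right, and the paper's resolution is to cite Theorem~\ref{thm:cohoch} (from \cite{hps-cohoch}), which packages precisely the assertion that $Sz_{g}$ is a weak equivalence of \emph{mixed bundles}, not merely of chain complexes.
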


It follows almost immediately from the proposition above that applying the normalized chains functor to an h-normal map of simplicial groups gives rise to an h-normal map of chain algebras, i.e., $C_{*}$ preserves both h-normality and h-conormality, under mild connectivity hypotheses, the proof is given below.

\begin{cor} \label{cor:ch-normal-induced}  If $f:G\to G'$ is an h-normal morphism of reduced simplicial groups, then $C_{*}f:C_{*}G\to C_{*}G'$ is an h-normal morphism of chain algebras.
\end{cor}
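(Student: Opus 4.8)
The plan is to deduce this from Propositions \ref{prop:char-h-normal} and \ref{prop:ch-conormal-induced}, using the classical multiplicative comparison between the cobar construction on normalized chains and the normalized chains on the Kan loop group, together with the homotopy-invariance of h-normality.

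First I would apply Proposition \ref{prop:char-h-normal}: since $f$ is h-normal, there is a simplicial map $g:X\to Y$, with $X$ and $Y$ reduced, such that $f$ is weakly equivalent, \emph{as a morphism of simplicial groups}, to $\G\iota_g:\G(Y\hker X)\to\G X$. As the normalized chains functor carries weak equivalences of simplicial groups to quasi-isomorphisms of chain algebras, this already yields a weak equivalence in $\cat{mon}$ between $C_*f$ and $C_*\G\iota_g$.

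Next I would apply Proposition \ref{prop:ch-conormal-induced} to $g$ (after, if necessary, replacing the model $g:X\to Y$ produced above by a weakly equivalent one so that $Y$ is $1$-reduced and $X$, $Y$ are of finite type): it asserts that $(\Om C_*\iota_g,\,C_*g)$ is a normal pair, so in particular the chain algebra morphism $\Om C_*\iota_g:\Om C_*(Y\hker X)\to\Om C_*X$ is h-normal. It then remains to identify $\Om C_*\iota_g$ with $C_*\G\iota_g$ up to weak equivalence in $\cat{mon}$ — note that one cannot simply push the normal pair $(\G\iota_g,g)$ through $C_*$ and read off the answer, cf.\ Remarks \ref{rmks:norm}(1). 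For this I would invoke the natural quasi-isomorphism of chain algebras $\Om C_*Z\xrightarrow{\sim}C_*\G Z$, valid for reduced $Z$ (see, e.g., \cite{hpst} and the references therein); its naturality in $Z$ produces a commuting square of chain algebra maps linking $\Om C_*\iota_g$ to $C_*\G\iota_g$ through quasi-isomorphisms. Combining with the previous step, $C_*f$ is weakly equivalent in $\cat{mon}$ to the h-normal morphism $\Om C_*\iota_g$.

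Finally, since h-normality is a homotopy-invariant notion (Remarks \ref{rmks:norm}(3), which follows from Remark \ref{rmk:Borel-inv}), we conclude that $C_*f$ is h-normal. I expect the main obstacle to be this second step: one must be confident that the cobar--loop comparison is genuinely a morphism of chain algebras and sufficiently natural to assemble the required zigzag of chain-algebra weak equivalences between $\Om C_*\iota_g$ and $C_*\G\iota_g$, and one must arrange the finite-type and $1$-reducedness hypotheses needed before Proposition \ref{prop:ch-conormal-induced} can be invoked — either by a careful choice of the simplicial model $g:X\to Y$ coming out of Proposition \ref{prop:char-h-normal}, or by first replacing $G$ and $G'$ by suitable weakly equivalent simplicial groups.
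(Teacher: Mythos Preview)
Your proposal is essentially the same as the paper's own proof: reduce via Proposition \ref{prop:char-h-normal} to $f\simeq\G\iota_g$, invoke Proposition \ref{prop:ch-conormal-induced} to see that $\Om C_*\iota_g$ is h-normal, compare $\Om C_*\iota_g$ with $C_*\G\iota_g$ via Szczarba's natural chain-algebra quasi-isomorphism (Example \ref{ex:szczarba} in the paper), and conclude by homotopy invariance. The paper compresses all of this into three lines and is less explicit than you are about the $1$-reducedness and finite-type hypotheses; note that since $G'$ is reduced, $X=\overline\W G'$ is automatically $1$-reduced, which handles part of your worry.
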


\begin{proof} It follows from the proof of Proposition \ref{prop:char-h-normal} that we may assume that $f=\G \iota _{g}:\G(Y\hker X) \to \G X$ for some morphism $g:X\to Y$ of $1$-reduced simplicial sets. Since $C_{*}\G\iota _{g}$ is weakly equivalent to $\Om C_{*}\iota_{g}$, Proposition \ref {prop:ch-conormal-induced} and Remark \ref{rmks:norm}(2) together imply that $C_{*}\G\iota _{g}$ is h-normal.
\end{proof}

The key to  the proof of  Proposition \ref{prop:ch-conormal-induced} is the following immediate consequence of Theorems 3.15 and 3.16 in \cite{hps-cohoch}, which gives an explicit, natural chain-level model for any simplicial principal fibration, incorporating multiplicative and comultiplicative structures.  Functoriality of h-conormality should hold for functors between twisted homotopical categories for which an analogous theorem is true.

\begin{thm}\label{thm:cohoch}   If $g:X\to Y$ is a simplicial map, where $Y$ is $1$-reduced and both $X$ and $Y$ are of finite type, then there is a natural weak equivalence of mixed bundles of  chain complexes
\[
\xymatrix{\Om C_{*}Y \ar[d]_{\sim}^{Sz_{Y}}\ar [r]^(0.4){\del_{C_{*}g}}&C_{*}Y\hker C_{*}X \ar [d]_{\sim}^{Sz_{g}}\ar [r]^(0.6){\iota_{C_{*}g}}&C_{*}X\ar [d]_{=}\\
C_{*}\G Y\ar [r]^(0.4){C_{*}\del_{g}}& C_{*}(Y\hker X)\ar [r]^(0.6){C_{*}\iota _{g}}&C_{*}X.}
\]
\end{thm}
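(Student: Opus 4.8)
The plan is to deduce the statement from Theorems 3.15 and 3.16 of \cite{hps-cohoch} after making explicit the two rows as they arise from the twisting structures on $\cat{dgProj}_{\Bbbk}$ and on reduced simplicial sets. First I would record the concrete form of both mixed bundles. The hypotheses that $Y$ be $1$-reduced and that $X$ and $Y$ be of finite type guarantee that $C_{*}Y$ is a $1$-connected, coaugmented coalgebra with underlying complex in $\cat{dgProj}_{\Bbbk}$ and that $C_{*}X$ lies in $\cat{dgProj}_{\Bbbk}$, so every (co)bar and twisted (co)tensor construction below is legitimate. Since $\P C_{*}Y = C_{*}Y \otimes_{t_{\Om}} \Om C_{*}Y$ is the acyclic cobar construction, the Borel kernel is
\[
C_{*}Y \hker C_{*}X = C_{*}X \,\square_{C_{*}Y}\, \P C_{*}Y \cong C_{*}X \otimes_{t_{\Om}(C_{*}g)} \Om C_{*}Y,
\]
the twisted tensor product along the twisting cochain $C_{*}X \xrightarrow{C_{*}g} C_{*}Y \xrightarrow{t_{\Om}} \Om C_{*}Y$; here $\del_{C_{*}g}$ is the canonical inclusion of $\Om C_{*}Y$ along the coaugmentation of $C_{*}X$, and $\iota_{C_{*}g}$ is the projection onto $C_{*}X$. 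Dually, $Y \hker X = X \times_{\tau_{Y}g} \G Y$ is the twisted cartesian product along the twisting function $X \xrightarrow{g} Y \xrightarrow{\tau_{Y}} \G Y$, so that $C_{*}(Y\hker X) = C_{*}(X \times_{\tau_{Y}g} \G Y)$ inherits a right $C_{*}\G Y$-action and a left $C_{*}X$-coaction via the Eilenberg--Zilber equivalences, with $C_{*}\del_{g}$ induced by the fiber inclusion $\G Y \hookrightarrow X \times_{\tau_{Y}g} \G Y$ and $C_{*}\iota_{g}$ by the projection onto $X$.

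Next I would invoke the two theorems from \cite{hps-cohoch}. Theorem 3.15 there provides the Szczarba equivalence, a natural quasi-isomorphism of chain algebras $Sz_{Y}: \Om C_{*}Y \xrightarrow{\sim} C_{*}\G Y$, together with its compatibility with the canonical twisting data: $Sz_{Y}$ carries the universal twisting cochain $t_{\Om}: C_{*}Y \to \Om C_{*}Y$ to the chain-level image of the universal twisting function $\tau_{Y}: Y \to \G Y$, up to the usual Alexander--Whitney and shuffle corrections. Theorem 3.16 then upgrades this to the associated twisted objects, producing a natural quasi-isomorphism
\[
Sz_{g}: C_{*}X \otimes_{t_{\Om}(C_{*}g)} \Om C_{*}Y \xrightarrow{\sim} C_{*}(X \times_{\tau_{Y}g} \G Y)
\]
that is a map of left $C_{*}X$-comodules, restricts to $Sz_{Y}$ on the fibers, and intertwines the right $\Om C_{*}Y$-action with the right $C_{*}\G Y$-action along $Sz_{Y}$; concretely $Sz_{g}$ is $C_{*}X \otimes Sz_{Y}$ perturbed by the Eilenberg--Zilber contraction, exactly as in the classical twisted Eilenberg--Zilber theorem.

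Finally I would assemble the diagram. Commutativity of the left-hand square, $Sz_{g} \circ \del_{C_{*}g} = C_{*}\del_{g} \circ Sz_{Y}$, is precisely the assertion that $Sz_{g}$ restricts to $Sz_{Y}$ on fibers, combined with the identification of $\del_{C_{*}g}$ and $C_{*}\del_{g}$ with the respective fiber inclusions. Commutativity of the right-hand square, $C_{*}\iota_{g} \circ Sz_{g} = \iota_{C_{*}g}$, expresses that $Sz_{g}$ lies over $C_{*}X$, which is part of its being a morphism of $C_{*}X$-comodules. Since $Sz_{Y}$ is moreover an algebra map and $Sz_{g}$ is equivariant over it, the triple consisting of $Sz_{Y}$, $Sz_{g}$ and the identity on $C_{*}X$ is a morphism of mixed bundles, and it is a weak equivalence because $Sz_{Y}$ and $Sz_{g}$ are quasi-isomorphisms; naturality in $g$ is inherited from the naturality in \cite{hps-cohoch}.

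The main obstacle is purely a matter of matching conventions: one must verify that the universal twisting cochain $t_{\Om}$ and twisting function $\tau_{Y}$ fixed in the appendices of the present paper agree, up to the admissible natural transformations, with the data underlying \cite{hps-cohoch}, and that ``morphism of mixed bundles'' in the sense used here is exactly the simultaneous comodule and (twisted) module compatibility that \cite{hps-cohoch} establishes for $Sz_{g}$. There is no new homotopical content; one need only cite Szczarba's theorem (which is where $1$-reducedness enters) and the degreewise finite projectivity of the chains (where finite type enters) at the appropriate points.
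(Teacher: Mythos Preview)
Your proposal is correct and matches the paper's own treatment: the paper does not give an independent proof of this theorem but simply records it as ``the following immediate consequence of Theorems 3.15 and 3.16 in \cite{hps-cohoch}''. Your write-up is essentially an explicit unpacking of that citation, identifying the two rows with the twisted tensor product and twisted cartesian product coming from the respective twisting structures and then invoking the cited results; there is nothing to add or correct.
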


\begin{rmk} In the diagram above, the map $Sz_{Y}:\Om C_{*}Y\xrightarrow \sim C_{*}\G Y$ is the natural chain algebra quasi-isomorphism first defined by Szczarba \cite{szczarba} (cf. Example \ref{ex:szczarba}), and $Sz_{g}$ is a natural extension of $Sz_{Y}$.
\end{rmk}

\begin{proof}[Proof of Proposition \ref{prop:ch-conormal-induced}]   There is an equivalence of extended bundles
{\small \[
\xymatrix{\Om C_{*}X \ar [d]_{\Om C_{*}g}\ar [r]^{Sz_{X}}_{\sim}&C_{*}\G X\ar [d]_{C_{*}\G g}\ar [r]^=&C_{*}\G X\ar [d]_{C_{*}\del_{\iota_{g}}}&\Om C_{*}X\ar [l]_{Sz_{X}}^\sim \ar [d]_{\del_{C_{*}\iota_{g}}}\ar [r]^=& \Om C_{*}X\ar [d]_{\pi_{\Om C_{*}\iota_{g}}}\\
\Om C_{*}Y\ar[d]_{\del_{C_{*}g}}\ar[r]^{Sz_{Y}}_{\sim}&C_{*}\G Y\ar [d]_{C_{*}\del_{g}}\ar[r]^(0.35){C_{*}\tilde\iota}_(0.35){\sim}&C_{*}\big(X\hker(Y\hker X)\big)\ar [d]_{C_{*}\iota_{\iota_{g}}}&C_{*}X\hker C_{*}(Y\hker X)\ar[l]_{Sz_{\iota_{g}}}^\sim\ar[d]_{\iota_{C_{*}\iota_{g}}}\ar [r]_(0.45){\sim}&\Om C_{*}X\hquot \Om C_{*}(Y\hker X)\ar [d]_{\delta_{\Om C_{*}\iota_{g}}}\\
C_{*}Y\hker C_{*}X\ar [d]_{\iota_{C_{*}g}}\ar [r]_{\sim}^{Sz_{g}}&C_{*}(Y\hker X)\ar[d]_{C_{*}\iota _{g}}\ar[r]^=&C_{*}(Y\hker X)\ar[d]_{C_{*}\iota g}& C_{*}(Y\hker X)\ar[l]_{=}\ar[d]_{C_{*}\iota _{g}}\ar[r]^{u}_{\sim}&\Bar\Om C_{*}(Y\hker X)\ar[d]_{\Bar \Om C_{*}\iota _{g}}\\
C_{*}X\ar [r]^=&C_{*}X\ar [r]^=&C_{*}X&C_{*}X\ar [l]_=\ar[r]^{u}_{\sim}&\Bar \Om C_{*}X,}
\] }
where the elementary equivalence of the first column arises from applying Theorem \ref{thm:cohoch} to $g$, that of the second column from applying Lemma \ref{lem:simpl-good} to $g$, that of the third column from applying Theorem \ref{thm:cohoch} to $\iota_{g}:Y\hker X \to X$ and that of the fourth column from applying Lemma \ref{lem:amusing}(2) to $C_{*}\iota _{g}$.   We can therefore conclude that $C_{*}g$ is h-conormal, with associated conormality structure $\Om C_{*}\iota_{g}$.
\end{proof}

\appendix

\section{Twisting functions and twisting cochains}

\subsection{Twisting functions}\label{sec:twistingfcns}
We recall here the simplicial constructions that play an crucial role in this article.

\begin{defn} Let $X$ be a simplicial set and $G$ a simplicial group,
where the neutral element in any dimension is noted $e$. A
degree $-1$ map of graded sets $\tau :X\to G$ is a \emph{twisting
function} if
\begin{align*}
d _{0}\tau (x)&=\big(\tau (d_{0}x)\big)^{-1}\tau (d
_{1}x)\\
d _{i}\tau (x)&=\tau (d
_{i+1}x)\quad i>0\\
s_{i}\tau (x)&=\tau (s_{i+1}x)\quad i\geq 0\\
\tau (s_{0}x)&=e
\end{align*}
for all $x\in X$.
\end{defn}

\begin{rmk}\label{rmk:twisting-fcn} Let $X$ be a reduced simplicial set, and let $\G X$ denote the Kan simplicial loop group on $X$ \cite{may}.  Let $\bar x\in (\G X)_{n-1}$ denote a free group generator, corresponding to $x\in X_{n}$. There is a universal, canonical twisting function $\tau_{X}:X\to \G X$, given by $\tau_{X}(x)=\bar x$.  This twisting function is universal in the sense that it mediates a bijection between the set of twisting functions with source $X$ and the set of morphisms of simplicial groups with source $\G X$.
\end{rmk}

\begin{rmk}  If $f:X\to Y$ is a simplicial map, $\tau: Y\to G$ is a twisting function, and $\vp: G\to H$ is a simplicial homomorphism, then $\vp t f:X\to H$ is clearly also a twisting function.
\end{rmk}

\begin{defn} Let $\tau :X\to G$ be a twisting function, where $G$ operates on the left on a simplicial set $Y$.
The \emph{twisted cartesian product} of $X$
and $Y$, denoted $X\times _{\tau}Y$, is a simplicial set such
that
$(X\times_{\tau} Y)_{n}=X_{n}\times Y_{n}$, with faces and
degeneracies given by
\begin{align*}
d_{0}(x,y)&=(d _{0}x,\tau (x)\cdot d _{0}y)\\
d _{i}(x,y)&=(d _{i}x,d_{i}y)\quad i>0\\
s_{i}(x,y)&=(s _{i}x,s_{i}y)\quad i\geq 0.
\end{align*}
\end{defn}

If $Y$ is a Kan complex, then the projection $X\times_{\tau} Y\to X$ is
a Kan fibration \cite {may}.

\begin{defn} Let $G$ be a simplicial group, where the neutral element in each dimension is denoted $e$.  The \emph{Kan classifying space} of $G$ is the simplicial set $\overline \W G$ such that $\overline \W G_{0}=\{(\;)\}$ and for all $n>0$,
\[
\overline \W G_{n}=G_{0}\times \cdots \times G_{n-1},
\]
with face maps given by
\[
d_{i}(a_{0},...,a_{n-1})=\begin{cases} (d_{1}a_{1},...,d_{n-1}a_{n-1})&: i=0\\ (a_{0},...,a_{i-2}, a_{i-1}\cdot d_{0}a_{i}, d_{1}a_{i+1},..., d_{n-1-i}a_{n-1})&: 0<i<n\\ (a_{0},...,a_{n-1})&: i=n\end{cases}
\]
and degeneracies given by $s_{0}\big ( (\;)\big)=(e)$, while
\[
s_{i}(a_{0},...,a_{n-1})=\begin{cases} (e, s_{0} a_{0},...,s_{n-1}a_{n-1})&:i=0\\ (a_{0},...,a_{i-1}, e,s_{0}a_{i},...,s_{n-1-i}a_{n-1})&: 0<i<n\\(a_{0},...,a_{n-1},e)&: i=n.\end{cases}
\]
\end{defn}

\begin{rmk} Our definition of the Kan classifying space differs for the sake of convenience from that in \cite{may} by a permutation of the factors.
\end{rmk}

\begin{rmk}\label{rmk:couniv-twist} The classifying space $\overline \W G$ deserves its name, as homotopy classes of simplicial maps into $\overline \W G$ classify twisted cartesian products with fiber $G$.  The universal $G$-bundle is a twisted cartesian product
\[
G\hookrightarrow \overline \W G\times_{\nu_{G}}G \twoheadrightarrow \overline \W G,
\]
with contractible total space, where $\nu_{G}: \overline \W  G\to G$ is the natural (couniversal) twisting function defined by
\[
\nu_{G} (a_{0},...a_{n-1})=a_{n-1},
\]
and $G$ acts on itself by left multiplication.  The twisting function $\nu_{G}$ is couniversal in the sense that it mediates a bijection between the set of  twisting functions with target $G$ and the set of simplicial maps with target $\overline \W G$.
\end{rmk}

\begin{rmk}\label{rmk:eta-equiv} The classifying space functor $\overline \W $ is right adjoint to the Kan loop group functor $G$. Furthermore, the unit map $\eta_{X}: X\to \overline \W \G X$ is a weak equivalence for all reduced simplicial sets $X$.
\end{rmk}

\begin{rmk}\label{rmk:unit-counit}  It follows from the universality of $\tau_{X}:X\to \G X$ and the couniversality of $\nu_{G}:\overline \W G \to G$ that the diagram
\[
\xymatrix{\overline\W G\ar [d]_{\eta_{G}}\ar[r]^{\nu_{G}}\ar[dr]^{\tau_{\overline \W G}}&G\\
\overline\W \G\overline \W G\ar [r]^{\nu_{\G\overline \W G}}&\G\overline \W G\ar[u]_{\ve_{G}}      }
\]
commutes for all simplicial groups $G$.
\end{rmk}

\subsection{Twisting cochains}\label{sec:twistingcochains}

\begin{notn} In this section we apply the following notational conventions.
\begin{itemize}
\item We apply the Koszul sign convention for commuting elements  of a graded module or for commuting a morphism of graded modules past an element of the source module.  For example,  if $V$ and $W$ are graded algebras and $v\otimes w, v'\otimes w'\in V\otimes W$, then 
\[
(v\otimes w)\cdot (v'\otimes w')=(-1)^{|w|\cdot |v'|}vv'\otimes ww'.
\] 
Furthermore, if $f:V\to V'$ and $g:W\to W'$ are morphisms of graded modules, then for all $v\otimes w\in V\otimes W$,
\[
(f\otimes g)(v\otimes w)=(-1)^{|g|\cdot |v|} f(v)\otimes g(w).
\]
\item The \emph {suspension} endofunctor $s$ on the category of graded modules over a commutative ring $\Bbbk$ is defined on objects $V=\bigoplus _{i\in \mathbb Z} V_ i$ by
$(sV)_ i \cong V_ {i-1}$.  Given a homogeneous element $v$ in
$V$, we write $sv$ for the corresponding element of $sV$. The suspension $s$ admits an obvious inverse, which we denote $\si$.
\item Let $T$ denote the endofunctor on the category of free graded $\Bbbk$-modules, for a commutative ring $\Bbbk$, given by
\[
TV=\oplus _{n\geq 0}V^{\otimes n},
\]
where $V^{\otimes 0}=R$.  An elementary tensor belonging to the summand $V^{\otimes n}$ of $TV$ is denoted $v_{1}|\cdots |v_{n}$, where $v_{i}\in V$ for all $i$.
\end{itemize}
\end{notn}

We begin by recalling the cobar and bar constructions in the differential graded framework.
Let $\cat{Coalg}_{\Bbbk}$ denote the category of $1$-connected, coaugmented chain coalgebras over a commutative ring $\Bbbk$, i.e., of coaugmented comonoids in $\cat {Ch}_{\Bbbk}$ such that $C_{<0}=0$, $C_{0}=\Bbbk$, and $C_{1}=0$. Let $\cat {Alg}_{\Bbbk}$ denote the category of connected, augmented chain algebras over $\Bbbk$, i.e., of augmented monoids $B$ in $\cat {Ch}_{\Bbbk}$ such that $B_{<0}=0$ and $B_{0}=R$.

 The \emph{cobar construction} functor $\Om:\cat {Coalg}_{\Bbbk}\to \cat {Ch}_{\Bbbk}$, defined by
\[
\Om C= \left(T (\si C_{>0}), d_{\Om}\right)
\]
where, if $d$ denotes the differential on $C$, then
\begin{align*}
d_{\Om}(\si c_{1}|\cdots|\si c_{n})=&\sum _{1\leq j\leq n}\pm \si c_{1}|\cdots |\si (dc_{j})|\cdots |\si c_{n}\\
&+\sum _{1\leq j\leq n}\pm \si c_{1}|...|\si c_{ji}|\si c_{j}{}^{i}|\cdots |\si c_{n},
\end{align*}
with signs determined by the Koszul rule, where the reduced comultiplication applied to $c_{j}$ is $c_{ji}\otimes c_{j}{}^{i}$ (using Einstein implicit summation notation).

Observe that the graded $\Bbbk$-vector space underlying $\Om C$ is naturally a free associative algebra, with multiplication given by concatenation. The differential $d_{\Om }$ is a derivation with respect to this concatenation product, so that $\Om C$ is itself a chain algebra.  We can and do therefore choose to consider the cobar construction to be a functor $\Om :  \cat {Coalg}_{\Bbbk}\to \cat {Alg}_{\Bbbk}$.  Moreover, if the chain complex underlying a chain coalgebra is degreewise finitely generated and projective, then the same is true of its cobar construction.

 The \emph{bar construction} functor from $\cat{Alg}_{\Bbbk}$ to $\cat {Ch}_{\Bbbk}$, defined by
\[
\Bar B=\left(T (sB_{>0}), d_{\Bar}\right)
\]
where, if $d$ is the differential on $B$, then (modulo signs, which are given by the Koszul rule)
\begin{align*}
d_{\Bar}(sb_{1}|\cdots|sb_{n})=&\sum _{1\leq j\leq n}\pm sb_{1}|\cdots |s(db_{j})|\cdots |sb_{n}\\
&+\sum _{1\leq j<n}\pm sb_{1}|...|s(b_{j}b_{j+1})|\cdots |sb_{n}.
\end{align*}

Observe that the graded $\Bbbk$-module underlying $\Bar B$ is naturally a cofree coassociative coalgebra, with comultiplication given by splitting of words. The differential $d_{\Bar}$ is a coderivation with respect to this splitting comultiplication, so that $\Bar B$ is itself a chain coalgebra. We can and do therefore choose to consider the bar construction to be a functor $\Bar :  \cat {Alg}_{\Bbbk}\to \cat {Coalg}_{\Bbbk}$. Moreover, if the chain complex underlying a chain algebra is degreewise finitely generated and projective, then the same is true of its bar construction.

 Let $\eta: Id\to \Bar \Om$ denote the unit of this adjunction.  It is well known that for all $1$-connected, coaugmented chain coalgebras $C$, the counit map
\begin{equation}\label{eqn:unit-barcobar}
\eta_{C}:C\xrightarrow\simeq\Bar\Om C
\end{equation}
is a quasi-isomorphism of chain coalgebras.

\begin{defn}
A \emph{twisting cochain} from a $1$-connected, coaugmented chain coalgebra $(C,d)$ with comultiplication $\Delta$ to a connected, augmented chain algebra $(A,d)$ with multiplication $m$ consists of a linear map $t:C\to A$ of degree $-1$ such that
\[dt+td=m (t\otimes t)\Delta.\]
\end{defn}

\begin{rmk}
A twisting cochain $t:C\to A$ induces both a chain algebra map
\[
\alpha _{t}:\Om C\to A
\]
specified by $\alpha _{t}(\si c)=t(c)$ and a chain coalgebra map (the adjoint of $\alpha_{t}$ under the $(\Om, \Bar)$-adjunction)
\[
\beta _{t}:C\to \Bar A, 
\]
 satisfying
 \[
\alpha_{t}=\ve_{A}\circ \Om \beta_{t}\quad\text{and}\quad \beta _{t}=\Bar\alpha_{t}\circ \eta_{C}.
\]
It follows that $\alpha_{t}$ is a quasi-isomorphism if and only if $\beta_{t}$ is a quasi-isomorphism.
\end{rmk}

 \begin{ex} Let $C$ be a $1$-connected, coaugmented chain coalgebra. The \emph{universal  twisting cochain}
\[
t_{\Om}:C\to \Om C
\]
is defined by $t_{\Om }(c)=s^{-1} c$ for all $c\in C$, where $\si c$ is defined to be $0$ if $|c|=0$.  Note that  $\alpha_{t_{\Om}}=Id_{\Om C}$, so that $\beta _{t_{\Om}}=\eta_{C}$.  Moreover, $t_{\Om}$ truly is universal, as all twisting cochains $t:C\to A$ factor through $t_{\Om}$, since the diagram
 \[
\xymatrix{C\ar[r] ^{t_{\Om}}\ar [dr]_{t}&\Om C\ar [d]^{\alpha_{t}}\\ &A}
\]
 always commutes.
 \end{ex}

  \begin{ex} Let $A$ be a connected, augmented chain algebra. The \emph{couniversal  twisting cochain}
\[
t_{\Bar}:\Bar A\to A
\]
is defined by $t_{\Bar  }(sa)=a$ for all $a\in A$ and $t_{\Bar}(sa_{1}|\cdots |sa_{n})=0$ for all $n>1$.  Note that  $\beta_{t_{\Bar}}=Id_{\Bar A}$, so that $\alpha _{t_{\Om}}=\ve_{A}$.  Moreover, $t_{\Bar}$ truly is couniversal, as all twisting cochains $t:C\to A$ factor through $t_{\Bar}$, since the diagram
 \[
\xymatrix{&\Bar A\ar [d]^{t_{\Bar}}\\ C\ar[ur]^{\beta_{t}}\ar [r]_{t}&A}
\]
 always commutes.
 \end{ex}

 \begin{ex} \label{ex:szczarba} Let $K$ be a reduced simplicial set, and let $\G K$ denote its Kan loop group. In 1961 \cite {szczarba}, Szczarba gave an explicit formula for a twisting cochain
\[
sz_{K}:C_{*}K\to C_{*}\G K,
\]
natural in $K$.  The associated chain algebra map
\begin{equation}\label{eqn:szczarba}
Sz _{K}:=\alpha_{sz_{K}}:\Om C_{*}K\to C_{*}\G K
\end{equation}
is a quasi-isomorphism of chain algebras for every $1$-reduced $K$ \cite{hess-tonks}.  It follows that the induced coalgebra map
\[
Sz_{K}^\sharp:=\beta_{sz_{K}}:C_{*}K\to \Bar C_{*}\G K
\]
is also a quasi-isomorphism when $K$ is $1$-reduced.
\end{ex}

\begin{rmk}  If $t:C\to A$ is a twisting cochain, $f:C'\to C$ is a chain coalgebra map and $g:A\to A'$ is a chain algebra map, then $gtf:C'\to A'$ is also a twisting cochain.
\end{rmk}

\begin {defn} Let $t:C\to A$ be a twisting cochain.  Let $M$ be a right $A$-module,
where $\rho :M\otimes A\to M$ is the $A$-action, and let  $N$ be a left $C$-comodule, where $\lambda
:N\to C\otimes N$ is the $C$-coaction.  Let $d$ denote the differential on both $M$ and $N$. The \emph{twisted tensor product} of $M$ and $N$ over $t$ is a chain complex  $M\ot_tN=(M\otimes N,
D_{t})$, where 
\[
D_{t}=d\otimes N+M\otimes d+
(\rho\otimes N)(M\otimes t\otimes N)(M\otimes
\lambda).
\]
\end{defn}

\begin{rmk} There are analogous constructions for left $A$-modules and
right $C$-comodules, using the twisting isomorphism $A\otimes C\cong
C\otimes A$.
\end{rmk}

\section{Twisting structures}\label{sec:twisting}

We recall in this section  a categorical structure that conveniently generalizes both  twisting cochains from differential graded coalgebras to differential graded algebras and twisting functions from simplicial sets to simplicial groups.  Such a definition was first formulated in \cite{hess-lack}; this presentation is a variant of the somewhat less highly category-theoretical formulation in \cite{hess-scott}, strongly emphasizing a bundle-theoretic perspective.
Twisting structures provide a useful formalism for taking into account important monoidal structures that are not
given by the categorical product.

We begin by studying mixed bundles, which are the building blocks of a twisting structure.  We then define twisting structures and study Nomura-Puppe sequences in categories endowed with a twisting structure.  We conclude with two explicit examples, based on the categories of simplicial sets and of chain complexes.

Throughout this section $(\cat M, \otimes, I)$ denotes a monoidal category.   If $A$ and $A'$ are  monoids in $\cat M$, then ${}_{A}\cat {Mod}_{A'}$ denotes the category of $(A,A')$-bimodules.  Similarly, ${}_{C'}\cat {Comod}_{C}$ denotes the category of $(C',C)$-bicomodules, where $C'$ and $C$ are comonoids.  Let $\cat {Mon}$ and $\cat {Comon}$ denote the categories of augmented monoids and coaugmented comonoids in $\cat M$, respectively.

\subsection{Mixed bundles}

The following definitions are classical.

\begin{defn}  Let $A$ be a monoid in $\cat M$.  Let $(M,\rho)$ be a right $A$-module, and let $(N,\lambda)$ be a left $A$-module.  The \emph{tensor product of $M$ and $N$ over $A$} is the coequalizer in $\cat M$
\[
M\ot_A  N:=\operatorname{coequal}(M\otimes A\otimes N \egal{\rho\otimes N}{M\otimes \lambda} M\otimes N),
\]
if it exists.

 Let $C$ be a comonoid in $\cat M$.  Let $(M,\rho)$ be a right $C$-comodule, and let $(N,\lambda)$ be a left $C$-comodule.  The \emph{cotensor product of $M$ and $N$ over $C$} is the equalizer in $\cat M$
 \[
M\square_C N:=\operatorname{equal}(M\otimes N\egal{\rho\otimes N}{M\otimes \lambda} M\otimes C\otimes N),
\]
if it exists.
 \end{defn}

Objects of $\cat M$ that are endowed with a compatible action and coaction are our primary object of study here.

\begin{defn} If $C$ is a comonoid in $\cat M$, and $A$ is a monoid in $\cat M$, then ${}_{C}\cat {Mix}_{A}$ is the category of  \emph{$(C,A)$-mixed modules}, where
{\small
\begin{multline*}
\ob {}_{C}\cat {Mix}_{A}\\
=\{ (M, \lambda, \rho)\mid M\in \cat M, (M,\lambda)\in {}_{C}\cat {Comod}, (M,\rho)\in \cat {Mod}_{A}, (C\otimes \rho) (\lambda \otimes A)=\lambda\rho\},
\end{multline*}}
and morphisms preserve both the action and the coaction.
\end{defn}

\begin{defn} Let $(\cat M, \otimes, I)$ be a monoidal category.  The \emph{category of mixed bundles in $\cat M$}, denoted $\cat {Bun}$, has as objects sequences of morphisms
\[
A\xrightarrow i N\xrightarrow p C,
\]
where $A\in \ob \cat {Mon}$, $C\in \ob \cat {Comon}$, $N\in {}_{C}\cat {Mix}_{A}$, $i$ is a morphism of right $A$-modules, and $p$ is a morphism of left $C$-comodules.  Morphisms in $\cat {Bun}$ are the obvious structure-preserving triples of morphisms in $\cat M$.

The \emph {source functor} $\mathfrak S:\cat {Bun}\to \cat {Mon}$ and \emph{target functor} $\mathfrak T:\cat {Bun}\to \cat {Comon}$ are specified by
\[
\mathfrak S (A\xrightarrow i N\xrightarrow p C)=A \quad\text{and}\quad \mathfrak T (A\xrightarrow i N\xrightarrow p C)=C.
\]
\end{defn}

Twisting structures are defined for monoidal categories $(M, \otimes, I)$ that admit a particularly nice class of mixed modules, as in the definition below.  We use in this definition the set-theoretic hierarchy of categories established in Appendix A.2 of \cite{michel-thesis}.

 \begin{defn}\label{defn:tt}  Let $(\cat M, \otimes, I)$ be a monoidal category.  A \emph{twistable triple} 
 \[
 (\cat {mon}, \cat{comon}, \operatorname{mix})
 \]
 consists of full subcategories $\cat {mon}$ and $\cat{comon}$ of $\cat {Mon}$ and $\cat {Comon}$, respectively, and a pseudofunctor 
 \[
\operatorname{mix}: \cat{comon}^{}\times \cat{mon}^{op}\to \cat {CAT}: (C,A)\mapsto {}_{C}\cat {mix}_{A}
 \]
 where $\cat {CAT}$ is the $2$-XL-category of all (ordinary) categories, such that for all $A,A'\in \ob \cat{mon}$ and $C,C'\in \ob\cat {comon}$,
 \begin{enumerate}
 \item ${}_{C}\cat {mix}_{A}$ is a full subcategory of  $ {}_{C}\cat {Mix}_{A}$;
 \item cotensoring gives rise to a functor
 \[
 C'_{g}\square_{C}-:{}_{C}\cat {mix}_{A} \to {}_{C'}\cat {mix}_{A}
 \]
 for all $g\in \cat{comon}(C',C)$, where $C'_{g}$ denotes $C'$ endowed with the right $C$-comodule structure induced by $g$ and its usual left $C'$-comodule structure such that iterated cotensoring is associative up to natural isomorphism;
 \item tensoring gives rise to a functor
 \[
 -\otimes_{A}{}_{f}A':{}_{C}\cat {mix}_{A}\to {}_{C}\cat {mix}_{A'}
 \]
 for all $f\in \cat {mon}(A,A')$, where ${}_{f}A'$ denotes $A'$ endowed with the left $A$-module structure induced by $f$ and its usual right $A'$-module structure such that iterated tensoring is associative up to natural isomorphism; and
 \item the natural map
 \[
 (C'_{f}\square_{C}M)\otimes _{A}{}_{g}A'\to C'_{f}\square_{C}(M\otimes _{A}{}_{g}A')
 \]
 is an isomorphism for all $f\in \cat{comon}(C',C)$, $M\in {}_{C}\cat {mix}_{A}$ and $g\in \cat{mon}(A,A')$.
 \end{enumerate}
 If $(\cat {mon}, \cat {comon}, \operatorname{mix})$ is a twistable triple, let $\cat {bun}$ denote the full subcategory of $\cat {Bun}$ of which the objects $A\xrightarrow j M \xrightarrow p C$ are  such that $A\in \ob \cat {mon}$,  $C\in \ob\cat {comon}$ and $M\in \ob {}_{C}\cat {mix}_{A}$.
 \end{defn}

\begin{ex} Consider the monoidal category $\cat {sSet}$ of simplicial sets. The monoidal product is the categorical product, which implies that all objects are naturally comonoids, where the comultiplication is the diagonal map, and that an $X$-comodule structure on $Y$ is equivalent to the existence of a morphism $Y\to X$.  Cotensor products are thus simply pullbacks.  

Let $\cat {sSet}_{0}$  and $\cat {sGr}$ denote the categories of reduced simplicial sets and of simplicial groups, respectively, and let
 \[
\operatorname{mix}: \cat{sSet_{0}}^{}\times \cat{sGr}^{op}\to \cat {CAT}: (X,G)\mapsto {}_{X}\cat {Mix}_{G},
 \]
i.e., we consider \emph{all} mixed modules. On morphisms $\operatorname{mix}$ is given by extension of coefficients in the comonoid variable and restriction of coefficients in the monoid variable.  It is easy to check that the triple $(\cat {sGr}, \cat {sSet}, \operatorname{mix})$ is twistable, since a simplicial morphism $p:Y\to X$ where $Y$ admits a right $G$-action corresponds to a mixed $(X,G)$-module if and only if $p(y\cdot a)=p(y)$ for all $y\in Y_{n}$, $a\in G_{n}$ and $n\geq 0$.
\end{ex}

\begin{ex}\label{ex:tt-ch} Let $\Bbbk$ be any commutative ring, and let $\cat {dgProj}_{\Bbbk}$ denote the category of differential graded $\Bbbk$-modules that are projective and finitely generated in each degree, where the monoidal product $\otimes$ is the usual graded tensor product over $\Bbbk$.  Let $\cat {mon}$ denote the category of augmented, connected chain algebras and $\cat {comon}$ the category of coaugmented, $1$-conneced chain coalgebras in $\cat {dgProj}_{\Bbbk}$.  
Let
 \[
\operatorname{mix}: \cat{comon}^{}\times \cat{mon}^{op}\to \cat {CAT}: (C,A)\mapsto {}_{C}\cat {mix}_{A},
 \]
be the pseudofunctor such that objects of ${}_{C}\cat {mix}_{A}$ are mixed modules $M$ with underlying (nondifferential) graded mixed module of the form $C\otimes X\otimes A$ for some graded $\Bbbk$-module $X$. On morphisms $\operatorname{mix}$ is given by extension of coefficients in the comonoid variable and restriction of coefficients in the monoid variable. 

For all $M\in \ob {}_{C}\cat {mix}_{A}$ with underlying graded $\Bbbk$-module $C\otimes X\otimes A$ and any coalgebra morphism $g:C'\to C$, there is a natural isomorphism between the graded $\Bbbk$-module underlying $C'_{g}\square_{C}M$ and $C'\otimes X\otimes A$; the dual result holds for algebra morphisms.  It is therefore straightforward to show that the triple $(\cat{mon}, \cat {comon}, \operatorname{mix})$ is twistable.  
\end{ex}

Certain types of mixed bundles play a key role in the definition of twisting structures.

\begin{defn}  Let $(\cat M, \otimes, I)$ be a monoidal category, and let $(\cat {mon}, \cat {comon}, \operatorname{mix})$ be a twistable triple. Let $\zeta=(A\xrightarrow j N\xrightarrow q C)\in \ob \cat {bun}$.   Let $\eta:I\to C$ and $\ve: A\to I$ denote the coaugmentation of $C$ and the augmentation of $A$, respectively.

If $j= \eta\square_{C} N$, then $\zeta$ is \emph{principal}.  If $q=N\otimes_{A}\ve$, then $\zeta$ is \emph{coprincipal}.  A bundle that is both principal and coprincipal is called \emph{biprincipal}.

The full subcategory of $\cat {bun}$ consisting of biprincipal bundles is denoted $\cat {biprin}$.
\end{defn}

\begin{rmk}\label{rmk:biprin} Let $(\cat M, \otimes, I)$ be a monoidal category, and let $(\cat {mon}, \cat {comon}, \operatorname{mix})$ be a twistable triple. Let $\zeta=(A\xrightarrow j N\xrightarrow q C)\in \ob \cat{bun}$. If $\zeta$ is principal, then $A \cong I\square_{C} N$, while if $\zeta$ is coprincipal, then $C\cong N\otimes_{A}I$.  It follows that if $\zeta$ is principal, then $j$ is naturally a morphism in ${}_{C}\cat {mix}_{A}$, since $\eta:I\to C$ is a morphism of $C$-bicomodules, where $I$ is endowed with the trivial $C$-bicomodule structure induced by $\eta$.  Similarly, if $\zeta$ is coprincipal, then $q$ is naturally a morphism in ${}_{C}\cat {mix}_{A}$.  We conclude that if $\zeta$ is biprincipal, then it can be seen as a sequence of morphisms in ${}_{C}\cat {mix}_{A}$.
\end{rmk}

\begin{defn}  Let $(\cat M, \otimes, I)$ be a monoidal category, and let $(\cat {mon}, \cat {comon}, \operatorname{mix})$ be a twistable triple. Let $\zeta=(A\xrightarrow j M\xrightarrow q C)\in \ob\cat {bun}$. 

If $\zeta$ is coprincipal, and
$f:A\to A'$ is a morphism in $\cat {mon}$, then  the \emph{coprincipal bundle induced by $f$} is
\[
f_{*}(\zeta)=(A'\xrightarrow {j\ot_{A}A'} M\ot_{A} A' \xrightarrow{M\otimes_{A}\ve'} C),
\]
where $\ve':A'\to I$ is the augmentation of $A'$.

 If $\zeta $ is principal, and $g: C'\to C$ is a morphism in $\cat {comon}$, then  the \emph{principal bundle induced by $g$} is
\[
g^*(\zeta)=(A \xrightarrow {\eta'\square _{C}M} C'\square_{C}M \xrightarrow{g\square_{C}M} C'),
\]
where $\eta': I\to C'$ is the coaugmentation of $C'$.
\end{defn}

Note that condition (3) of Definition \ref{defn:tt} implies that $f_{*}(\zeta)$ is indeed coprincipal, while condition (2) of the same definition implies the dual result.  It follows then from condition (4) that if $\zeta$ is biprincipal, then so are $f_{*}(\zeta)$ and $g^*(\zeta)$. 

\begin{rmk}\label{rmk:commute}  Let $(\cat M, \otimes, I)$ be a monoidal category, and let $(\cat {mon}, \cat {comon}, \operatorname{mix})$ be a twistable triple. Let $\zeta=(A\xrightarrow j M\xrightarrow q C)\in \ob\cat {biprin}$.  If $f:A\to A'$ is a morphism in $\cat {mon}$, and $g:C'\to C$ is a morphism in $\cat {comon}$, then there are natural isomorphisms
\[
f_{*}g^*(\zeta)\cong g^*f_{*}(\zeta),
\]
by condition (4) of Definition \ref{defn:tt}.
\end{rmk}

\begin{rmk}\label{rmk:bndlmap} Let $(\cat M, \otimes, I)$ be a monoidal category, and let $(\cat {mon}, \cat {comon}, \operatorname{mix})$ be a twistable triple. Let $\zeta=(A\xrightarrow i M \xrightarrow p C)$ and $\xi=(B \xrightarrow j N \xrightarrow q D)$ be objects in $\cat {bun}$. Let $\vp=(\alpha, \gamma, \beta): \zeta \to \xi$ be a morphism of bundles.

If $\zeta$ and $\xi $ are coprincipal, and
\[
\xymatrix {A \ar[d]_{}^\alpha\ar [r]^f& A'\ar [d]_{}^{\alpha'}\\ B\ar [r]^{g}& B'}
\]
is a commuting diagram of monoid morphisms, then there is an induced morphism of bundles
\[
(\alpha, \alpha')_{*}: f_{*}(\zeta) \to g_{*}(\xi),
\]
given explicitly by
\[
\xymatrix{A'\ar [d]^{\alpha'}\ar[rr]^{i\ot_{A}A'}&&M\ot_{A}A'\ar[d]^{\gamma'}\ar [rr]^{M\ot_{A}\ve_{A'}}&&C\ar [d]^\beta\\ B'\ar[rr]^{j\ot_{B}B'}&&N\ot_{B}B'\ar [rr]^{N\ot_{B}\ve_{B'}}&&D,}
\]
where $\gamma'$ is the unique map induced on the coequalizers by $\gamma$, $\alpha$ and $\alpha'$.  In particular, since
\[
\xymatrix {A \ar@{=}[d]\ar @{=} [r]& A\ar [d]^f\\ A\ar [r]^{f}& A'}
\]
commutes for every monoid morphism $f:A\to A'$, there is a morphism of coprincipal bundles
\[
(A,f)_{*}:\zeta \to f_{*}(\zeta).
\]

Dually, if $\zeta$ and $\xi$ are principal, and
\[
\xymatrix{C' \ar[d]_{}^{\beta '}\ar [r]^f& C\ar [d]_{}^{\beta}\\ D'\ar [r]^{g}& D}
\]
is a commuting diagram of comonoid morphisms, then there is an induced morphism of bundles
\[
(\beta',\beta)_{*}:f^*(\zeta) \to g^*(\xi),
\]
given explicitly by
\[
\xymatrix {A \ar[d]^{\alpha}\ar [rr]^{\eta_{C'}\square_{C}M}&&C'\square_{C}M \ar[d]^{\gamma''}\ar [rr]^{f\square_{C}M}&&C' \ar[d]^{\beta'}\\
			B\ar [rr]^{\eta_{D'}\square_{D}N}&& D'\square_{D}N\ar [rr]^{g\square_{D}D'}&&D'}
\]
where $\gamma''$ is the unique map induced on the equalizers by $\gamma$, $\beta$ and $\beta'$.  In particular, since
\[
\xymatrix{C'\ar [d]^{g}\ar [r]^g&C\ar @{=}[d]\\ C\ar@{=}[r]&C}
\]
commutes for every comonoid morphism $g:C'\to C$, there is a morphism of principal bundles
\[
(g, C)_{*}:g^*(\xi)\to \xi.
\]
\end{rmk}

\subsection{Twisting structures: definition}

The following definition is a pragmatic and simplified variant of the definition of twisting structures formulated in \cite{hess-lack}.

 \begin{defn}\label{defn:twisting} A \emph{quasitwisting structure} on a  monoidal category $(\cat M, \otimes , I)$ endowed with a twistable triple $(\cat {mon}, \cat {comon}, \operatorname{mix})$ consists of a pair of functors
 
 \[
\zeta: \cat {mon} \to \cat {biprin}\quad\text{and}\quad \xi: \cat {comon} \to \cat {biprin}
\]
 such that $\mathfrak S\circ \zeta = Id$ and $\mathfrak T\circ \xi =Id$, while $\Om=\mathfrak S\circ \xi$ and $\Bar =\mathfrak T\circ \zeta $ form an adjoint pair 
 \[
\cat {comon}\adjunct{\Om}{\Bar}\cat {mon}.
\]
\end{defn}

\begin{notn} For all monoids $A$ and comonoids $C$ in $\cat N$, we write
\[
\zeta(A)= (A\xrightarrow {j_{A}} \U A \xrightarrow {q_{A}} \Bar A)
\]
and
\[
\xi(C)=(\Om C \xrightarrow {i_{C}} \P C \xrightarrow {p_{C}} C)
\]
and call these the \emph{classifying bundles} for $A$ and $C$, for reasons that should become more clear as we explain the role that these special bundles play.
\end{notn}

\begin{defn} \label{defn:twist} Let $u: Id\to \Bar \Om$ and $v: \Om \Bar \to Id$ denote the unit and counit of the $(\Om, \Bar)$-adjunction. A quasitwisting structure $(\Om, \Bar, \zeta, \xi)$ on a monoidal category $(\cat M, \otimes , I)$ endowed with a twistable triple $(\cat {mon}, \cat {comon}, \operatorname{mix})$ is a \emph{twisting structure} if
\begin{enumerate}
\item for every morphism $g:C\to \Bar A$ in $\cat {comon}$, there are natural isomorphisms of bundles
\[
\xymatrix{g^{*}\big(\zeta (A)\big) \ar[d]^\cong &=&\big(A\ar[d]^=\ar [r]&C \square_{\Bar A} \U A\ar[d]_{}^{\cong}\ar[r]&C\big)\ar[d]^=\\
(g^\flat )_{*}\big(\xi (C)\big)&=&\big(A\ar [r] &\P C\ot_{\Om C}A\ar [r] &C\big)},
\]
where $g^\flat:\Om C\to A$ denotes the transpose of $g$;
\item for every morphism $f:A\to A'$ in $\cat {mon}$, there is a natural isomorphism of bundles
\[
\xymatrix{(\Bar f)^{*}\big(\zeta (A')\big) \ar[d]^\cong &=&\big(A'\ar[d]^=\ar [r]&\Bar A\square_{\Bar A'}\U A'\ar[d]_{}^{\cong}\ar[r]&\Bar A\big)\ar[d]^=\\
f_{*}\big(\zeta (A)\big)&=&\big(A'\ar [r] &\U A\ot_{A}A'\ar [r] &\Bar A\big);}
\]
and
\item for every morphism $g:C'\to C$ in $\cat {comon}$, there is a natural isomorphism of bundles
\[
\xymatrix{g^{*}\big(\xi (C)\big) \ar[d]^\cong &=&\big(\Om C\ar[d]^=\ar [r]&C' \square_{C} \P C\ar[d]_{}^{\cong}\ar[r]&C'\big)\ar[d]^=\\
(\Om g)_{*}\big(\xi (C')\big)&=&\big(\Om C\ar [r] &\P C'\ot_{\Om C'}\Om C\ar [r] &C'\big)}
\]
\end{enumerate}
\end{defn}

\begin{rmk}  Note that axiom (1) of the definition above implies in particular that there are isomorphisms of bundles
\[
\xymatrix{u_{C}^{*}\big(\zeta (\Om C)\big) \ar[d]^\cong &=&\big(\Om C\ar[d]^=\ar [r]&C \square_{\Bar \Om C} \U \Om C\ar[d]_{}^{\cong}\ar[r]&C\big)\ar[d]^=\\
\xi (C)&=&\big(\Om C\ar [r] &\P C\ar [r] &C\big)}
\]
and
\[
\xymatrix{(v_{A})_{*}\big(\xi (\Bar A)\big) \ar[d]^\cong &=&\big(A\ar[d]^=\ar [r]&\P \Bar A\ot_{\Om \Bar A} A\ar[d]_{}^{\cong}\ar[r]&\Bar A\big)\ar[d]^=\\
\zeta (A)&=&\big(A\ar [r] &\U A\ar [r] &\Bar A\big).}
\]
\end{rmk}

Our interest in twisting structures is motivated by the existence of the constructions described in the next definition, of which part (1) is modeled on the Borel construction associated to the action of a topological group on a topological space, while part (2) is analogous to the definition of the homotopy fiber of a continuous map.

\begin{defn}  Let $(\cat M, \otimes, I)$ be monoidal category endowed with a twistable triple $(\cat {mon}, \cat {comon}, \operatorname{mix})$ and a twisting structure  $(\Om, \Bar, \zeta, \xi)$.
\begin{enumerate}
\item Let $f:A\to A'$ be a morphism of augmented monoids in $\cat N$.  The \emph{Borel quotient} of $f$ is
\[
A'\hquot A=\U A\otimes _{A} A'\in{}_{\Bar A}\cat {Mix}_{A'},
\]
where $A'$ is considered as a left $A$-module via the structure induced by $f$.
\item Let $g:C'\to C$ be a morphism of coaugmented comonoids in $\cat N$.  The \emph{Borel kernel} of $g$ is
\[
C\hker C'=C' \square _{C} \P C\in{}_{C'}\cat {Mix}_{\Om C},
\]
where $C'$ is considered as a right $C$-comodule via the structure induced by $g$.
\end{enumerate}
\end{defn}

The Borel quotient and Borel kernel constructions fit into particularly interesting biprincipal bundles.

\begin{defn}\label{defn:puppe} Let $(\cat M, \otimes, I)$ be a monoidal category endowed with a twistable triple $(\cat {mon}, \cat {comon}, \operatorname{mix})$ and a twisting structure  $(\Om, \Bar, \zeta, \xi)$.
\begin{enumerate}
\item Let $f:A\to A'$ be a morphism $\cat {mon}$.   The \emph{Nomura-Puppe sequence} associated to $f$ is the sequence of morphisms in $\cat M$
\[
A\xrightarrow f A' \xrightarrow {\pi_{f}} A'\hquot A \xrightarrow {\delta _{f}} \Bar A \xrightarrow{\Bar f} \Bar A',\]
where $\pi_{f}=j_{A}\otimes_{A}A'$ and $\delta _{f}=\U A\otimes _{A} \ve_{A'}$.
\item Let $g:C'\to C$ be a morphism in $\cat {comon}$.   The \emph{dual Nomura-Puppe sequence} associated to $g$ is the sequence of morphisms in $\cat M$
\[
\Om C'\xrightarrow {\Om g}\Om C \xrightarrow {\del_{g}} C\hker C'\xrightarrow {\iota _{g}} C' \xrightarrow{g} C,
\]
where $\iota_{g}=C'\square_{C}p_{C}$ and $\del _{g}=\eta_{C'}\square _{C} \P C$.
\end{enumerate}
\end{defn}

\begin{rmk} Using the notation of the definition above, observe that
\[
(A' \xrightarrow {\pi_{f}} A'\hquot A \xrightarrow {\delta _{f}} \Bar A)=f_{*}\big(\zeta (A)\big)\cong (\Bar f)^*\big( \zeta(A')\big),
\]
\[
(\Om C \xrightarrow {\del_{g}} C\hker C'\xrightarrow {\iota _{g}} C' )=g^*\big( \xi(C)\big)\cong (\Om g)_{*}\big(\xi(C')\big),
\]
and that both are biprincipal bundles.
\end{rmk}

The bundles defined above are members of a particularly interesting class of biprincipal bundles.

\begin{defn}  Let $(\cat M, \otimes, I)$ be a monoidal category endowed with a twistable triple $(\cat {mon}, \cat {comon}, \operatorname{mix})$ and a twisting structure  $(\Om, \Bar, \zeta, \xi)$.  An  object $\zeta=(A\to N\to C)$ of $\cat {biprin}$ is \emph{classifiable} with respect to the given twisting structure if  there exists a comonoid map $g:C\to \Bar A$ such that
\[
\zeta \cong g^*\big(\zeta(A)\big).
\]
The morphism $g$ is called the \emph{classifying map} of the bundle $\zeta$.
\end{defn}

\begin{rmk} Recall that  axiom (1) of Definition \ref{defn:twist} implies that $\zeta \cong g^*\big(\zeta(A)\big)$ if and only if $\zeta \cong (g^\flat)_{*}\big(\xi(C)\big)$, where $g^\flat: \Om C\to A$ is the transpose of $g:C\to \Bar A$.
\end{rmk}

\subsection{Twisting structures: examples}

\subsubsection{Simplicial sets}

When $(\cat M,\otimes, I)=(\cat {sSet},\times, *)$, we work with the twisting structure $(\G, \overline \W, \zeta , \xi)$.  We refer the reader to, e.g., \cite{may} for the definition of the Kan loop group functor $\G: \cat {sSet}_{0}\to \cat {sGr}$, where $\cat {sSet}_{0}$ denotes the category of reduced simplicial sets and $\cat {sGr}$ the category of simplicial groups. We recall the definition of its right adjoint $\overline\W: \cat {sGr}\to \cat {sSet}_{0}$ in Appendix \ref{sec:twistingfcns}, where we also sketch the theory of twisting functions, which we use below in defining $\zeta$ and $\xi$.

Recall that since the monoidal structure considered on $\cat {sSet}$ is the categorical product, any object $X$ admits a natural comonoid structure given by the diagonal map $\Delta_{X}$, while a (left or right) $(X, \Delta_{X})$-comodule structure on an object $Y$ corresponds to a simplicial map $f:Y\to X$.   Under this identification, the cotensor product of two  $(X, \Delta_{X})$-comodules $f:Y\to X$ and $g:Z\to X$ is exactly their pullback $Y\underset X\times Z$.

For any simplicial group $G$,
\[
\zeta (G)= (G \xrightarrow {j_{G}} \overline \W G \times_{{\nu_{G}}} G \xrightarrow{q_{G}} \overline \W G),
\]
 the well-known ``universal $G$-bundle,'' where $\nu_{G}:G\to \overline\W G$ is couniversal twisting function, $j_{G}$ is the obvious inclusion and $q_{G}$ the obvious projection.
On the other hand, if $X$ is any reduced simplicial set, then
\[
\xi (X)=(\G X\xrightarrow {i_{X}} X\times_{ {\tau_{X}}} \G X \xrightarrow {p_{X}} X),
\]
the ``path fibration on $X$,'' where $\tau_{X}: X\to \G X$ is the universal twisting function, $i_{X}$ is the obvious inclusion, and $p_{X}$ the obvious projection.

It is an easy exercise in using twisting functions to show that this quasitwisting structure is indeed a twisting structure.

If $f:G\to G'$ is a morphism of simplicial groups, then its Borel quotient is
\[
G'\hquot G =\overline \W G\times_{{f\nu_{G}}} G',
\]
which is a model for the homotopy orbits of the action of $G$ on $G'$ induced by $f$.  In particular, $*\hquot G=\ow G$.
If $g:X'\to X$ is a morphism of reduced simplicial sets, then its Borel kernel is
\[
X\hker X'=X' \times_{\tau_{X} g} \G X,
\]
which is a model for the homotopy fiber of $g$. In particular, $X\hker * =\G X$.

\subsubsection{Chain complexes} To define a twisting structure on $\cat{dgProj}_{\Bbbk}$, we use the machinery recalled in Appendix \ref{sec:twistingcochains}. Let  $\cat {mon}$ and $\cat{comon}$ denote, as before (Example \ref{ex:tt-ch}),  the categories of augmented, connected algebras and of coaugmented, $1$-connected coalgebras in $\cat{dgProj}_{\Bbbk}$.  Let ${}_{C}\cat {mix}_{A}$ also be defined as in Example \ref{ex:tt-ch}.

The twisting structure we use in this case is $(\Om, \Bar, \zeta, \xi)$, where
\[
\Om: \cat{comon}\to \cat {mon}\quad\text{and}\quad \Bar :\cat{comon}\to \cat {mon}
\]
 are the cobar and bar constructions recalled in Appendix \ref{sec:twistingcochains}.  Moreover, for all $A\in \ob \cat{mon}$,
\[
\zeta(A)=(A \xrightarrow {j_{A}} \Bar A  \ot_{{t_{\Bar}} } A \xrightarrow {q_{A}} \Bar A),
\]
where $t_{\Bar}$ is the couniversal twisting cochain, $j_{A}(a)= 1\ot a$ for all $a\in A$ and $q_{A}(w\otimes a)=w \cdot \ve(a)$ for all $w\in \Bar A$ and $a\in A$.  Observe that $\Bar A  \ot_{{t_{\Bar}} } A\in {}_{\Bar A}\cat{mix}_{A}$ as required. For all $C\in \ob\cat {comon}$,
\[
\xi(C)=(\Om C \xrightarrow{i_{C}} C\otimes_{{t_{\Om}} }\Om C \xrightarrow {p_{C}} C),
\]
where $t_{\Om}$ is the universal twisting cochain, $i_{C}(w)=1\otimes w$ for all $w\in \Om C$ and $p_{C}(c\otimes w)= c\cdot \ve (w)$ for all $c\in C$ and $w\in \Om C$.  Again, $C\otimes_{{t_{\Om}} }\Om C\in {}_{C}\cat {mix}_{\Om C}$, as required.

Analogously to the simplicial case, it is an easy exercise in using twisting cochains to show that this quasitwisting structure is indeed a twisting structure.

\bibliographystyle{amsplain}

\bibliography{normal}

\end{document}